\newcommand\nthalias[1]{\AddToHook{env/#1/begin}{\crefalias{lemma}{#1}}}
\crefname{section}{Section}{Sections}
\crefname{subsection}{\S}{\S\S}
\theoremstyle{plain}
\newtheorem{lemma}{Lemma}[section]
\newtheorem{proposition}[lemma]{Proposition}
\newtheorem{corollary}[lemma]{Corollary}
\newtheorem{theorem}[lemma]{Theorem}
\theoremstyle{nonumberplain}
\theoremstyle{plain}
\newtheorem{definition}[lemma]{Definition}
\newtheorem{example}[lemma]{Example}
\newtheorem{remark}[lemma]{Remark}
\newtheorem{remarks}[lemma]{Remarks}
\newtheorem{notation}[lemma]{Notation}
\crefname{definition}{definition}{definitions}
\crefname{ex}{example}{examples}
\crefname{exs}{example}{examples}
\crefname{remark}{remark}{remarks}
\crefname{remarks}{remark}{remarks}
\crefname{convention}{convention}{conventions}
\crefname{notation}{notation}{notations}
\crefname{table}{table}{tables}
\crefname{lemma}{lemma}{lemmas}
\crefname{proposition}{proposition}{propositions}
\crefname{corollary}{corollary}{corollaries}
\crefname{theorem}{theorem}{theorems}
\crefname{enumi}{}{}
\crefname{conjecture}{conjecture}{conjectures}
\crefname{assumption}{assumption}{Assumptions}
\crefname{construction}{construction}{Constructions}
\crefname{equation}{}{}
\numberwithin{equation}{section}
\renewcommand{\theequation}{\thesection-\arabic{equation}}
\theoremstyle{nonumberplain}
\newtheorem{proof}{Proof}
\newcommand\bC{{\mathbb C}}
\newcommand\bD{{\mathbb D}}
\newcommand\bG{{\mathbb G}}
\newcommand\bN{{\mathbb N}}
\newcommand\bP{{\mathbb P}}
\newcommand\bR{{\mathbb R}}
\newcommand\bS{{\mathbb S}}
\newcommand\bT{{\mathbb T}}
\newcommand\bZ{{\mathbb Z}}
\newcommand\cE{{\mathcal E}}
\newcommand\cF{{\mathcal F}}
\newcommand\cM{{\mathcal M}}
\newcommand\cO{{\mathcal O}}
\DeclareMathOperator{\id}{id}
\DeclareMathOperator{\ix}{ix}
\DeclareMathOperator{\qt}{\mathrm{qt}}
\DeclareMathOperator{\End}{\mathrm{End}}
\DeclareMathOperator{\Hom}{\mathrm{Hom}}
\newcommand\numberthis{\addtocounter{equation}{1}\tag{\theequation}}
\newcommand{\cat}[1]{\textsc{#1}}
\newcommand{\ben}[1]{{\bf \color{red} **{#1}**}}
\def\polhk#1{\setbox0=\hbox{#1}{\ooalign{\hidewidth
      \lower1.5ex\hbox{`}\hidewidth\crcr\unhbox0}}}
\newcommand{\trivdim}[2]{\mathrm{dim}^{#2}_{\mathrm{LT}}(#1)}
\newcommand{\wtrivdim}[2]{\mathrm{dim}^{#2}_{\mathrm{WLT}}(#1)}
\newcommand{\strivdim}[2]{\mathrm{dim}^{#2}_{\mathrm{SLT}}(#1)}
\newcommand{\bes}{\begin{equation*}}
  \newcommand{\ees}{\end{equation*}}
\newcommand{\be}{\begin{equation}}
  \newcommand{\ee}{\end{equation}}
\newcommand{\stars}[1]{\operatorname{Star}_{#1}}
\newcommand{\xrightarrowdbl}[2][]{%
  \xrightarrow[#1]{#2}\mathrel{\mkern-14mu}\rightarrow
}
\newcommand{\thetasphere}[2]{C(\bS^{#1}_{#2})}
\begin{document}

\title{Continuity and equivariant dimension}
\author{Alexandru Chirvasitu and Benjamin Passer}

\date{}

\newcommand{\Addresses}{{
    \bigskip
    \footnotesize

    \textsc{Department of Mathematics, University at Buffalo, Buffalo,
      NY 14260-2900, USA}\par\nopagebreak \textit{E-mail address}:
    \texttt{achirvas@buffalo.edu}

    \medskip
    
    \textsc{Department of Mathematics, United States Naval Academy, Annapolis, MD 21402-5002, USA}\par\nopagebreak \textit{E-mail address}: \texttt{passer@usna.edu}
  }}

\maketitle

\begin{abstract}
  We study the local-triviality dimensions of actions on $C^*$-algebras, which are invariants developed for noncommutative Borsuk-Ulam theory. While finiteness of the local-triviality dimensions is known to guarantee freeness of an action, we show that free actions need not have finite weak local-triviality dimension. Moreover, the local-triviality dimensions of a continuous field may be greater than those of its individual fibers, and the dimensions may fail to vary continuously across the fibers. However, in certain circumstances upper semicontinuity of the weak local-triviality dimension is guaranteed. We examine these results and counterexamples with a focus on noncommutative tori and noncommutative spheres, both in terms of computation and theory.
\end{abstract}

\noindent {\em Key words:} local-triviality dimension; continuous fields; deformations; free actions; noncommutative sphere; noncommutative torus; vector bundle; matrix bundle

\vspace{.5cm}

\noindent{MSC 2020: 46L55; 46L65; 46L80; 46L85; 46M20; 55R25; 55S40}


\section{Introduction}\label{se:intro}

The Borsuk-Ulam Theorem states that if $f: \bS^n \to \bS^n$ is a continuous odd function, then the degree of $f$ is nonzero. Equivalently, there is no continuous odd function $g: \bS^n \to \bS^m$ where $m < n$. While there are many equivalent forms of the theorem, each such form generalizes slightly differently into the noncommutative setting, meaning the study of unital $C^*$-algebras in place of compact Hausdorff spaces. The antipodal action $\vec{x} \mapsto -\vec{x}$ on the sphere $\bS^n$ induces an action of $\bZ/2$ on the $C^*$-algebra $C(\bS^n)$ of continuous complex-valued functions, which in turn provides a grading of the same $C^*$-algebra. This is the familiar decomposition of functions into \lq\lq even\rq\rq\hspace{0pt} and \lq\lq odd\rq\rq\hspace{0pt} components. Similarly, an odd function $g: \bS^n \to \bS^m$ may be split into individual coordinates: a list of odd, real-valued functions $g_0, \ldots, g_m \in C(\bS^n)$ with $g_0^2 + g_1^2 + \ldots + g_m^2 = 1$. Thus, the Borsuk-Ulam Theorem places a lower bound on the number of functions $g_i$ that must appear.

In the general noncommutative setting, key elements of the above discussion translate into conditions on freeness or equivariant dimension. Precursors to the study of equivariant dimension appeared in \cite[\S 3]{taghavi}, which motivated some later problems in noncommutative Borsuk-Ulam theory. Taghavi's questions primarily concerned counting the number of odd self-adjoint elements required to produce an invertible square-sum. It should be noted that this is not the only interpretation of noncommutative Borsuk-Ulam theory; references \cite{qdeformed, BDH, bentheta, benfree, alexbenjoin, alexbeninvariants} consider an approach focused on (non-)existence of equivariant morphisms. The full formulation of equivariant dimension, including a much more general context, appeared in \cite{hajacindex_xv2}. The local-triviality dimension of an action is defined in \cite[Definition 3.1]{hajacindex_xv2}, and it is a noncommutative analogue of the Schwarz genus. Moreover, it is related to the Rokhlin dimension of \cite[Definition~1.1]{dimrok}, as explored in \cite[\S 5.2]{hajacindex_xv2}. We recall the definitions of the local-triviality dimension in \Cref{se:secont}. 

In this manuscript, we will typically focus on actions of $\bS^1$ or its finite cyclic subgroups, for which equivalent versions of the dimensions we study are found in \cite[\S 3]{cpt_dim}. Consider, for example, a unital $C^*$-algebra $A$ on which $\bZ/2$ acts. The action decomposes $A$ into eigenspaces for eigenvalues $\pm 1$; elements in the $-1$ eigenspace are called \textit{odd} and elements in the $1$ eigenspace are called \textit{even}. The \textit{local-triviality (LT for short) dimension} of this action is 

\begin{equation*} \trivdim{A}{\bZ/2} = \text{inf}\left\{n \in \bN: \exists \text{ odd self-adjoint } a_0, \ldots, a_n \in A, \sum\limits_{i=0}^n a_i^2 = 1 \right\},\end{equation*}
where we note that the value $\infty$ is possible. With this notation, the Borsuk-Ulam Theorem is equivalent to the claim that $\trivdim{C(\bS^n)}{\bZ/2} = n$, where $\bZ/2$ acts on $C(\bS^n)$ via the antipodal action. Because $C(\bS^n)$ is commutative, there are adjustments to this dimension that make no difference here, but vastly change the computation for noncommutative $C^*$-algebras. For example, one could relax the condition on the sum-square of the $a_i$ to be merely invertible instead of $1$. Alternatively, one could strengthen the requirements by insisting that the $a_i$ are normal commuting elements of $A$.  Of particular note is that finiteness of the local-triviality dimension implies freeness \cite[Theorem 3.8]{hajacindex_xv2}, though this is not always equivalent.

Here, we will study how the local-triviality dimension and its variants behave with respect to deformations and fields of $C^*$-algebras, with a focus on noncommutative spheres and tori. We are interested in circumstances where the local-triviality dimensions are continuous, or not continuous, with respect to a deformation parameter. By construction of examples, we find that the most one can generally hope for is \textit{semicontinuity} of the dimension, as explored in \Cref{se:secont}. Further, we extend work of \cite{cpt_dim} by finding free actions whose weak local-triviality dimension is infinite. In \Cref{se:rat}, we explore the $\theta$-deformed spheres and tori in more detail, with a focus on the rational case. In particular, viewing a rational noncommutative torus as a space of sections of a finite-dimensional vector bundle yields information about the strong local-triviality dimension. These results strengthen the noncommutative Borsuk-Ulam theory of \cite{bentheta} when the domain sphere is commutative.


\section{Preliminaries}\label{se:prel}

For what follows, $\mathrm{Prim}(A)$ denotes the primitive spectrum \cite[\S 4.1.2]{ped_auto-2e} of a $C^*$-algebra $A$: the space of primitive ideals, equipped with the Jacobson (or hull-kernel) topology \cite[\S 4.1.4]{ped_auto-2e}. Further, $Z(A)$ denotes the center of $A$, and $M(A)$ denotes the multiplier algebra of $A$. Following \cite[\S 3]{rief_canc}, say, we write
\begin{equation*}
  e(x):=\exp(2\pi i x). 
\end{equation*}

Many of our examples concern noncommutative spheres, found by replacing commutativity of generators with a scaled commutativity relation. In \cite{no_sph}, Natsume and Olsen define a family of noncommutative spheres by a universal construction, generalizing the $3$-dimensional spheres of Matsumoto \cite{mats_3sph-1}. We will adjust their notation to be consistent with that used for noncommutative tori. For a real $n \times n$ antisymmetric matrix $\theta$, the unital $C^*$-algebra $\thetasphere{2n-1}{\theta}$ is defined by generators $z_1, \ldots, z_n$ and relations
\begin{equation}\label{eq:sph}
    z_j z_j^* = z_j^* z_j,\quad \quad z_kz_j = e(\theta_{jk}) z_j z_k, \quad \quad  z_1 z_1^* + \ldots + z_nz_n^* = 1.
\end{equation} 
If the final generator $z_n$ of $\thetasphere{2n-1}{\theta}$ is central, then one also defines $\thetasphere{2n-2}{\theta}$ by imposing the additional relation $z_n = z_n^*$. While it is also possible to consider the case that $z_n$ anticommutes with other generators (see \cite{benanticommuting, bencrossed}), the resulting $C^*$-algebras are not deformations of the commutative even-dimensional spheres in any obvious way, and we will not consider them here. 

The action of $\bZ/2$ that negates each generator will be called the \textit{antipodal action}. Similarly, there is a $\bZ/k$-action that scales each generator by $\zeta = e(1 / k)$. When $k$ is understood, we will denote the order $k$ isomorphism that implements this action by $R$, so
\begin{equation*}
  R: \, z_j \mapsto \zeta z_j.
\end{equation*}

The Natsume-Olsen spheres, which we will also call $\theta$-spheres for brevity, have $K_1(\thetasphere{2n-1}{\theta}) \cong \bZ$ \cite[Theorem 4.3]{no_sph}, with a specific generator that is inspired by the commutative case.  

\begin{remark}\label{re:s3k1gen}
  Recall (e.g. \cite[p.1094]{no_sph}) that for the classical 3-sphere, the group $K_1(C(\bS^3))$ is generated by the element
  \begin{equation}\label{eq:s3k1gen}
    \begin{pmatrix}
      \phantom{-}z_1 & z_2 \\
      -z_2^* & z_1^*
    \end{pmatrix}
    \in
    U_2(C(\bS^3))
  \end{equation}
  where $z_i$ are the standard generators of $C(\bS^3)$. Alternatively: identify $\bS^3$ with the Lie group $SU(2)$, whereupon \Cref{eq:s3k1gen} is nothing but the obvious embedding
  \begin{equation*}
    \left(
      \bS^3\cong SU(2)
      \lhook\joinrel\xrightarrow{\quad}
      M_2
    \right)
    \in
    C(\bS^3)\otimes M_2\cong M_2(C(\bS^3)).
  \end{equation*}
\end{remark}

More generally, \cite[\S 5]{no_sph} shows that the cyclic group $K_1(\thetasphere{2n-1}{\theta}) \cong \bZ$ admits a generator of matrix dimension $2^{n-1}$, which we will denote $Z_\theta$, such that each entry is a $*$-monomial. For each of the $\bZ/k$ rotation actions defined above, the eigenspace of each entry is such that there exist unitary matrices $A, B \in M_{2^{n-1}}(\bC)$, independent of $\theta$, such that $R(Z_\theta) = A Z_\theta B$  (where $R$ is applied entrywise) and $A^k = B^k = I$. The antipodal action ($k = 2$) is particularly simple; in this case each entry of $Z_\theta$ is odd, so $R(Z_\theta) = -Z_\theta$. See \cite[\S 3]{bentheta} for a slightly modified construction that chooses the monomial coefficients continuously.

The Borsuk-Ulam Theorem extends to the Natsume-Olsen spheres by \cite[Corollary 4.12]{bentheta}; if $\phi: \thetasphere{2n-1}{\theta} \to \thetasphere{2n-1}{\omega}$ is $\bZ/k$-equivariant, then the $K_1$ degree $m$ of $\phi$ is congruent to $1$ modulo $k$. In particular, the induced map $K_1(\thetasphere{2n-1}{\theta}) \to K_1(\thetasphere{2n-1}{\omega})$ is nontrivial. However, the local-triviality dimensions of these actions are not all identical. In particular, \cite[Proposition 3.21]{alexbenjoin} shows that the $\theta$-sphere $\thetasphere{2n-1}{\theta}$ such that each generator anticommutes has local-triviality dimension $1$. Since the local-triviality dimension is integer-valued and the commutative sphere has dimension $2n-1$, this immediately shows that the local-triviality dimension fails to be continuous in the parameter $\theta$.

The $\theta$-spheres are related to a similar family of $C^*$-algebras called {\it quantum (or noncommutative) tori} (\cite[\S 12.2]{gvf_ncg} or \cite[\S 1]{rief_case}). If $\theta$ is an antisymmetric $n \times n$ real matrix, the $C^*$-algebra $A^n_\theta$, or $C(\mathbb{T}^n_\theta)$, is defined by generators $u_1, \ldots, u_n$ and relations
\begin{equation}\label{eq:tor}
    u_j u_j^* = u_j^* u_j = 1
\end{equation}
\begin{equation*}
  u_k u_j = e(\theta_{jk}) u_j u_k.
\end{equation*}
Since the set of $2$-dimensional antisymmetric matrices is one-dimensional, we abuse notation somewhat when $n = 2$, and simply refer to $\theta = \theta_{12} \in \mathbb{R}$. The noncommutative tori and noncommutative spheres are related by polar decomposition \cite[Theorem 2.5]{no_sph}, which we recall in \Cref{se:rat}. In particular, the action $R$ discussed on $\theta$-spheres factors through a related action on the quantum tori:
\begin{equation*} R: u_j \mapsto \zeta u_j. \end{equation*}

All $\theta$-spheres and noncommutative tori are known to be Rieffel deformations in the sense of \cite[\S 10.2]{rief_def-rd}. While it is not clear if the $\theta$-spheres or noncommutative tori form a continuous field over the set of antisymmetric matrices,  \cite[Theorem 8.13]{rief_def-rd} implies that the field is separately continuous in each entry. In particular, the distinction between those two cases does not matter for $3$-spheres or $2$-tori. As we will discuss continuity of fields in more detail, we recall some basic definitions.

Recall \cite[Definition 1.5]{kk2} that for a locally compact Hausdorff space $X$, a {\it $C_0(X)$-algebra} is a $C^*$-algebra $A$ equipped with a non-degenerate morphism $C_0(X)\to Z(M(A))$. For us, $X$ will typically be compact and $A$ will be unital, so there will be little need to worry about units. We will also revert to the notation $C(X)=C_0(X)$ when $X$ is compact.

\begin{definition}\label{def:gcx}
  For a group $\bG$ and a locally compact Hausdorff space $X$, a {\it $\bG$-$C_0(X)$-algebra} is a $C_0(X)$-algebra equipped with an action of $\bG$ by automorphisms that fix the structure morphism $C_0(X)\to Z(M(A))$.
\end{definition}

$C(X)$-algebras are one way to formalize the notion of a {\it field} of $C^*$-algebras \cite[Definition 1.1 and Propositions 1.2 and 1.3]{rief_flds}, while \cite[Theorem 2.3]{zbMATH00975912} equates the notion to that of an upper semicontinuous $C^*$-algebra {\it bundle}, generalizing \cite[Definition VII.8.2]{fd_bdl-1}.

For $x\in X$, we write $A_x$ for the {\it fiber}
\begin{equation*}
  A_x:=A/J_x,\quad J_x:=\text{ideal generated by }\{f\in C(X)\ |\ f(x)=0\}
\end{equation*}
of the $C(X)$-algebra. We denote the quotient map $A \to A_x$ by $\pi_x$ and use $ \|\cdot\|_x$ to refer to the norm on $A_x$. Moreover, we use $\|\pi_x(a)\|_x$ and $\|a\|_x$ interchangeably, viewing $\|\cdot\|_x$ as a seminorm on $A$. The values of these seminorms do not need to vary continuously in $x \in X$, but if they do, then the field itself is called continuous (see the discussion after \cite[Definition 1.2]{blnch_cxtens}).

\begin{definition}\label{def:contfield}
A $C(X)$-algebra $A$ is called a  {\it continuous field} if for each $a \in A$, the mapping
\begin{equation*}
  X\ni x\xmapsto{\quad}\|a\|_x\in \bR_{\ge 0}
\end{equation*}
is continuous. 
\end{definition}

\section{Local-Triviality Dimension and Fields}\label{se:secont}

If $(C(\bG), \Delta)$ is a compact quantum group with a coaction on a unital $C^*$-algebra $A$, then \cite[Definition 3.1]{hajacindex_xv2} defines the weak and plain local-triviality dimensions as follows. Here, we let $t$ denote the identity function in $C_0((0, 1])$. We also note that since $C(\bG)$ denotes a compact {\it quantum} group, it need not be commutative as a $C^*$-algebra, despite the notation used.

\begin{definition}\label{def:LT}
The local-triviality dimension $\trivdim{A}{\bG}$ is the infimum of the set of $n \in \mathbb{N}$ such that there exist $\bG$-equivariant $*$-homomorphisms $\rho_0, \ldots, \rho_n: C_0((0, 1]) \otimes C(\bG) \to A$ satisfying $\sum\limits_{j=0}^n \rho_j(t \otimes 1) = 1$.
\end{definition}

\begin{definition}\label{def:WLT}
The weak local-triviality dimension $\wtrivdim{A}{\bG}$ is the infimum of the set of $n \in \mathbb{N}$ such that there exist $\bG$-equivariant $*$-homomorphisms $\rho_0, \ldots, \rho_n: C_0((0, 1]) \otimes C(\bG) \to A$ with $\sum\limits_{j=0}^n \rho_j(t \otimes 1)$ invertible.
\end{definition}

The strong variant of the local-triviality dimension, found in \cite[Definition 3.20]{hajacindex_xv2}, is defined in reference to the noncommutative join operation of \cite[Definition 4.1]{joinfusion} or \cite[Definition 1.4]{BDH}. Here $E_n^\Delta \bG$ denotes the join of $n+1$ copies of $\bG$.

\begin{definition}\label{def:SLT}
The strong local-triviality dimension is the infimum of the set of $n \in \mathbb{N}$ such that there exists a $\bG$-equivariant unital $*$-homomorphism $\rho: C(E_n^\Delta \bG) \to A$. 
\end{definition}

For actions of (non-quantum) groups $\bG$ on commutative unital $C^*$-algebras $A = C(X)$, the three dimensions coincide: for the weak and plain dimensions, there is a straightforward proof using conjugation, and equality of the plain and strong dimensions follows from \cite[Proposition 2.4]{cpt_dim}. However, if $A$ is noncommutative, then the dimensions may differ, even if $\bG$ is an ordinary group. Of particular note is the fact that the join of any number of copies of $\bZ/2$ is a sphere, and consequently, the strong local-triviality dimension has a direct connection to the Borsuk-Ulam Theorem. In fact, the local-triviality dimensions introduced in \cite{hajacindex_xv2} were developed in tandem with noncommutative Borsuk-Ulam theory, in the setting of free coactions. 

A coaction $\delta: A \to A \otimes_{\text{min}} C(\bG)$ is called \textit{free} (see \cite{ell} or \cite[(1.10)]{BDH}) if
\[ \overline{\text{span}_\bC\{(a \otimes 1)\delta(b): a, b \in A\}} \, = \, A \otimes_{\text{min}} C(\bG) .\]
Multiple characterizations of freeness are also found in \cite[Theorem 0.4]{freeness}. Moreover, finiteness of any one of the local-triviality dimensions implies freeness of the coaction by \cite[Theorem 3.8]{hajacindex_xv2}. We will primarily be interested in the case that $A$ is noncommutative, but $\bG$ is an ordinary group (in particular, a compact abelian group). In this case, there is an equivalent condition to freeness found in \cite[Definition 1.6]{rieffelproper} or \cite[Definition 7.1.4]{phillipsfreeness}. If $\bG$ is compact abelian and $\alpha$ is an action of $\bG$ on $A$, then for each character $\gamma \in \widehat{\bG}$, there is an associated eigenspace (or spectral subspace)
\begin{equation*}
  A_\gamma \, := \, \{a \in A\ :\ \alpha_g(a) = \gamma(g) a,\ \forall g\in \bG\}.
\end{equation*}
The action $\alpha$ is then free precisely when
\[ \overline{A_\gamma A A_\gamma^*} = A. \]

For the case that $\bG$ is a closed subgroup of $\bS^1$, \cite[Propositions 3.1 and 3.2]{cpt_dim} show how the local-triviality dimensions may each be recast in terms of elements of the spectral subspaces.

\begin{proposition}\label{pr:alt_dim_circle}
  Consider an action of $\bS^1$ on a unital $C^*$-algebra $A$, and let $\gamma$ be a generator of $\widehat{\bS^1} \cong \bZ$. Then
  \begin{equation*}
    \trivdim{A}{\bS^1} = \inf\left\{n \in \mathbb{N}: \exists \, \operatorname{ normal } \, a_0, \ldots, a_n \in A_\gamma, \, \sum\limits_{i=0}^n a_i a_i^* = 1\right\},
  \end{equation*} 
  \begin{equation*}
    \wtrivdim{A}{\bS^1} = \inf\left\{n \in \mathbb{N}: \exists \, \operatorname{ normal } \, a_0, \ldots, a_n \in A_\gamma, \, \sum\limits_{i=0}^n a_i a_i^*  \, \operatorname{ is } \, \operatorname{ invertible}\right\},
  \end{equation*} 
  and
  \begin{equation*}
    \strivdim{A}{\bS^1} = \inf\left\{n \in \mathbb{N}: \exists \, \operatorname{ normal} \, \operatorname{commuting} a_0, \ldots, a_n \in A_\gamma, \, \sum\limits_{i=0}^n a_i a_i^* = 1\right\}.
  \end{equation*}
\end{proposition}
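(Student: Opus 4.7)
The plan is to translate $\bS^1$-equivariant $*$-homomorphisms out of $C_0((0,1])\otimes C(\bS^1)$ into normal elements of the spectral subspace $A_\gamma$, and then reparametrize via continuous functional calculus. I will describe the argument for $\trivdim{A}{\bS^1}$; the weak and strong cases are variations on the same theme.

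\textbf{First}, identify $C_0((0,1])\otimes C(\bS^1)\cong C_0(\bD\setminus\{0\})$ through the homeomorphism $(t,z)\mapsto tz$, under which $t\otimes 1$ corresponds to $w\mapsto|w|$ and $t\otimes z$ corresponds to the coordinate $\iota\colon w\mapsto w$. The resulting $C^*$-algebra is universal for a single normal contraction. Under the translation action of $\bS^1$ on $C(\bS^1)$ (trivial on $C_0((0,1])$), the generator $\iota$ lies in the spectral subspace indexed by a chosen generator $\gamma$ of $\widehat{\bS^1}$. Hence $\bS^1$-equivariant $*$-homomorphisms $\rho\colon C_0((0,1])\otimes C(\bS^1)\to A$ correspond bijectively via $c=\rho(\iota)$ to normal elements $c\in A_\gamma$ with $\|c\|\le 1$, and $\rho(t\otimes 1)=|c|$ under this correspondence.

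\textbf{Second}, convert between the normalizations $\sum\rho_i(t\otimes 1)=1$ and $\sum a_ia_i^*=1$. Given normal $a_i\in A_\gamma$ with $\sum a_ia_i^*=1$, set $c_i:=a_i|a_i|$; since $a_i$ is normal, it commutes with $|a_i|$, so $c_i$ is normal with $\|c_i\|\le 1$ and $c_ic_i^*=(a_ia_i^*)^2$, giving $|c_i|=a_ia_i^*$ and therefore $\sum\rho_i(t\otimes 1)=1$. Conversely, from equivariant $\rho_i$ with $\sum\rho_i(t\otimes 1)=1$, let $c_i:=\rho_i(\iota)$ and define $a_i:=f(c_i)$ by continuous functional calculus, where $f(w):=w|w|^{-1/2}$, extended continuously by $f(0):=0$. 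Because $f(\lambda w)=\lambda f(w)$ for $\lambda\in\bS^1$, the element $a_i$ lies in $A_\gamma$, is normal, and satisfies $a_ia_i^*=|c_i|$, hence $\sum a_ia_i^*=1$. Replacing ``$=1$'' with ``invertible'' throughout yields the corresponding equivalence for $\wtrivdim{A}{\bS^1}$, since the sums $\sum\rho_i(t\otimes 1)$ and $\sum a_ia_i^*$ agree under the two constructions.

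\textbf{Third}, for $\strivdim{A}{\bS^1}$: since $C(\bS^1)$ is commutative, the noncommutative join $E_n^\Delta\bS^1$ of \cite[Definition~4.1]{joinfusion} reduces to the classical iterated topological join, which is $\bS^{2n+1}\subset\bC^{n+1}$ carrying the diagonal $\bS^1$-action. Its coordinate functions $z_0,\ldots,z_n\in C(\bS^{2n+1})$ are mutually commuting normal elements of the $\gamma$-spectral subspace satisfying $\sum z_iz_i^*=1$, so $\bS^1$-equivariant unital $*$-homomorphisms $C(\bS^{2n+1})\to A$ correspond bijectively to commuting tuples of normal $a_i\in A_\gamma$ with $\sum a_ia_i^*=1$.

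The main obstacle is the functional-calculus reparametrization in the second step: one must simultaneously preserve membership in the spectral subspace $A_\gamma$ and adjust the normalization between $\sum|c_i|=1$ and $\sum a_ia_i^*=1$. The key is that the reparametrizing functions $w\mapsto w|w|^{-1/2}$ and $w\mapsto w|w|$ are homogeneous of degree one under $\bS^1$-rotation of $\bD$, which is precisely what forces their continuous functional calculi to land back in $A_\gamma$.
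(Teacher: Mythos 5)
Your argument is correct. The paper itself gives no proof of this proposition---it is quoted from \cite[Propositions 3.1 and 3.2]{cpt_dim}---and your identification of $C_0((0,1])\otimes C(\bS^1)$ with $C_0(\bD\setminus\{0\})$ together with the functional-calculus reparametrizations $w\mapsto w|w|$ and $w\mapsto w|w|^{-1/2}$ is essentially the argument of that reference; the only implicit ingredient worth flagging is the Fuglede--Putnam theorem, needed in the strong case to know that commuting normal elements $*$-commute and hence generate a commutative $C^*$-algebra, so that the universal property of $C(\bS^{2n+1})$ applies.
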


For $\bZ/k$-actions, an additional spectral condition is needed. Set
\begin{equation*}
\stars{k} := \{te(m/k) \ :\ t\in [0,1],\ m \in \{0, 1, \ldots, k-1\} \}.
\end{equation*}
This \lq\lq asterisk\rq\rq\hspace{0pt} shape is star-convex, with center $0$, generated by the $k$th roots of unity.

\begin{proposition}\label{pr:alt_dim_Z/k}
  Consider an action of $\bZ/k$ on a unital $C^*$-algebra $A$, and let $\gamma$ be a generator of $\widehat{\bZ/k} \cong \bZ/k$. Then
  \begin{equation*}
    \trivdim{A}{\bZ/k} = \inf\left\{n \in \mathbb{N}: \exists \, \operatorname{ normal } \, a_0, \ldots, a_n \in A_\gamma, \, \sigma(a_i) \subseteq \stars{k}, \, \sum\limits_{i=0}^n a_i a_i^* = 1\right\},
  \end{equation*} 
  \begin{equation*}
    \wtrivdim{A}{\bZ/k} = \inf\left\{n \in \mathbb{N}: \exists \, \operatorname{ normal } \, a_0, \ldots, a_n \in A_\gamma, \, \sigma(a_i) \subseteq \stars{k}, \, \sum\limits_{i=0}^n a_i a_i^*  \, \operatorname{ is } \, \operatorname{ invertible}\right\},
  \end{equation*} 
  and
  \begin{equation*}
    \strivdim{A}{\bZ/k} = \inf\left\{n \in \mathbb{N}: \exists \, \operatorname{ normal} \, \operatorname{commuting} a_0, \ldots, a_n \in A_\gamma, \, \sigma(a_i) \subseteq \stars{k}, \, \sum\limits_{i=0}^n a_i a_i^* = 1\right\}.
  \end{equation*}
\end{proposition}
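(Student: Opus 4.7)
My approach is to recast $\bZ/k$-equivariant $*$-homomorphisms out of $C_0((0,1])\otimes C(\bZ/k)$ as normal elements of the $\gamma$-eigenspace with appropriate spectral constraints, in parallel with the treatment of \Cref{pr:alt_dim_circle}. The crux is the identification of $\bZ/k$-$C^*$-algebras
\[
C_0\bigl((0,1]\bigr)\otimes C(\bZ/k)\ \cong\ C_0\bigl(\stars{k}\setminus\{0\}\bigr),
\]
induced by the homeomorphism $(0,1]\times \mu_k\to \stars{k}\setminus\{0\}$, $(t,\omega)\mapsto \sqrt{t}\,\omega$, where $\mu_k$ denotes the group of $k$-th roots of unity. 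Under this identification the generator $t\otimes 1$ corresponds to the function $z\mapsto |z|^2$, while the $\bZ/k$-translation action on $C(\bZ/k)$ transports to the action on $\stars{k}$ by rotation through $e(1/k)$.

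For the LT and WLT statements, given equivariant $*$-homomorphisms $\rho_j\colon C_0(\stars{k}\setminus\{0\})\to A$, I would set $a_j := \rho_j(f)$, where $f$ is the inclusion $z\mapsto z$, viewed as an element of $C_0(\stars{k}\setminus\{0\})$ (it vanishes at the missing origin). Normality of $a_j$, the spectral inclusion $\sigma(a_j)\subseteq \stars{k}$, and membership $a_j\in A_\gamma$ follow directly, and $a_j a_j^* = \rho_j(|f|^2) = \rho_j(t\otimes 1)$, so the sum condition transfers verbatim. Conversely, given normal $a_j\in A_\gamma$ with $\sigma(a_j)\subseteq \stars{k}$ (and $\|a_j\|\le 1$, which is automatic in the LT case and achievable by rescaling in the WLT case), continuous functional calculus produces a $*$-homomorphism $C(\stars{k})\to C^*(a_j)\subseteq A$, which restricted to the ideal $C_0(\stars{k}\setminus\{0\})$ supplies the required $\rho_j$; equivariance is immediate from $a_j\in A_\gamma$.

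The SLT statement reduces to the universal description of the equivariant join: $C(E_n^\Delta \bZ/k)$ is the universal unital $C^*$-algebra on $n+1$ pairwise commuting normal elements $a_0,\ldots,a_n$ in the $\gamma$-eigenspace with $\sigma(a_i)\subseteq \stars{k}$ and $\sum a_i a_i^*=1$, and equivariant unital $*$-homomorphisms out of it are exactly tuples of that form. The main subtlety across all three claims is the behavior at the basepoint $0\in\stars{k}$: the reason for modelling $C_0((0,1])\otimes C(\bZ/k)$ as $C_0(\stars{k}\setminus\{0\})$, rather than $C(\stars{k})$, is precisely that functions vanishing at $0$ are compatible with continuous functional calculus on a possibly non-invertible $a_j$, so no extra hypothesis (such as a unitary polar part of $a_j$) is needed to invert the correspondence.
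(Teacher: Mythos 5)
Your proof is correct and matches the approach the paper relies on: \Cref{pr:alt_dim_Z/k} is not proved in the paper itself but imported from \cite[Propositions 3.1 and 3.2]{cpt_dim}, whose argument is exactly this Gelfand-duality identification of $C_0((0,1])\otimes C(\bZ/k)$ with $C_0(\stars{k}\setminus\{0\})$ followed by continuous functional calculus on a normal element of $A_\gamma$ with spectrum in $\stars{k}$ (and, for the strong case, the identification of the classical join of $n+1$ copies of $\bZ/k$ with $\{(z_i)\in(\stars{k})^{n+1}\ :\ \sum_i |z_i|^2=1\}$). The only, harmless, inaccuracy is your remark that rescaling is needed in the weak case: since each $a_j$ is normal, its norm equals its spectral radius, so $\sigma(a_j)\subseteq\stars{k}\subseteq\overline{\bD}$ already forces $\|a_j\|\le 1$ there as well.
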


Since $\stars{2} = [-1, 1]$, the elements $a_i$ used to compute $\bZ/2$-dimensions are self-adjoints. As such, dimensions of $\bZ/2$-actions are typically much easier to compute than other cases. Also, as the set of invertible elements of $A$ is open, but $\{1\}$ is closed, the propositions suggest that weak and plain local-triviality dimensions may behave differently for fields of $C^*$-algebras. At the very least, for $\bG$-$C(X)$-algebras, we have the following immediate estimate on the global dimensions in terms of their respective fiber-wise counterparts.

\begin{lemma}\label{le:lesup}
  For a compact group $\bG$ and a $\bG$-$C(X)$-algebra $A$, we have
  \begin{equation}\label{eq:supax}
    \trivdim{A}{\bG}\ge \sup_{x\in X}\trivdim{A_x}{\bG},
  \end{equation}
  and similarly for $\wtrivdim{-}{\bG}$ and $\strivdim{-}{\bG}$.
\end{lemma}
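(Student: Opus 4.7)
The plan is to show that each fiber inherits a local-triviality witness by pushing one forward along the quotient map, so the content reduces to checking that $\pi_x:A\to A_x$ is $\bG$-equivariant and unital.

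The first step is to verify that each ideal $J_x$ is $\bG$-invariant. By \Cref{def:gcx}, the $\bG$-action on $A$ fixes the structure morphism $\iota:C(X)\to Z(M(A))$ pointwise. Hence for any $f\in C(X)$ with $f(x)=0$, any $a\in A$, and any $g\in \bG$, we have $\alpha_g(\iota(f)a)=\iota(f)\alpha_g(a)\in J_x$, so $\alpha_g(J_x)\subseteq J_x$. Consequently $\bG$ descends to an action on $A_x$, and $\pi_x:A\to A_x$ is a unital $\bG$-equivariant $*$-homomorphism.

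Now suppose $\trivdim{A}{\bG}\le n$, witnessed by $\bG$-equivariant $*$-homomorphisms $\rho_0,\ldots,\rho_n:C_0((0,1])\otimes C(\bG)\to A$ with $\sum_j \rho_j(t\otimes 1)=1$. Composing with $\pi_x$ yields $\bG$-equivariant $*$-homomorphisms $\pi_x\circ\rho_j:C_0((0,1])\otimes C(\bG)\to A_x$ satisfying
\begin{equation*}
  \sum_{j=0}^n (\pi_x\circ\rho_j)(t\otimes 1)=\pi_x\!\left(\sum_{j=0}^n \rho_j(t\otimes 1)\right)=\pi_x(1)=1,
\end{equation*}
so $\trivdim{A_x}{\bG}\le n$ and \Cref{eq:supax} follows. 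The same argument handles $\wtrivdim{-}{\bG}$, since unital $*$-homomorphisms preserve invertibility, and $\strivdim{-}{\bG}$, since composing the $\bG$-equivariant unital morphism $\rho:C(E_n^\Delta \bG)\to A$ of \Cref{def:SLT} with $\pi_x$ produces the required morphism into $A_x$.

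There is no serious obstacle here: the entire argument is the observation that $\pi_x$ is a unital $\bG$-equivariant quotient and the three dimensions are all defined by conditions that are preserved under such quotients. The only point worth spelling out is the $\bG$-invariance of $J_x$, which rests on $\bG$ fixing $\iota(C(X))$ as required by the definition of a $\bG$-$C(X)$-algebra.
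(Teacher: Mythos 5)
Your proof is correct and follows the same route as the paper, which simply notes that the quotient maps $\pi_x:A\to A_x$ are $\bG$-equivariant and that composing the witnessing morphisms with them transfers each dimension bound to the fibers. The additional detail you supply (the $\bG$-invariance of $J_x$ via the structure morphism being fixed) is a reasonable elaboration of what the paper leaves implicit.
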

\begin{proof}
  Immediate, as there exist $\bG$-equivariant morphisms $A\to A_x$ for all $x\in X$.  
\end{proof}

Cohomological obstructions can render the inequality of \Cref{le:lesup} strict.

\begin{example}\label{ex:supstrict}
  The group $\bG$ will be $\bZ/2$ and the space $X$ will be $\bP^1:=\bC\bP^1$, the complex projective line (i.e. space of lines in $\bC^2$). The global algebra $A$ underlying the field is simply
  \begin{equation*}
    A:=C(X)\otimes M_2\cong \mathrm{Cont}(X\to M_2).
  \end{equation*}
  The $\bZ/2$-action, on the other hand, is ``twisted'' conformant to the topology of $X$: the action on the fiber $A_x\cong M_2$ for a line
  \begin{equation*}
    \bP^1=X\ni x = \bC v,\quad v\in \bC^2\setminus\{0\}
  \end{equation*}
  is conjugation by the self-adjoint unitary $u_x:=2P_x-1$, where $P_x$ is the orthogonal projection on $x$. 

  The fiber-wise actions on the $A_x$ then have (weak, and plain, and strong) local-triviality dimension 0 by \cite[Proposition 4.5]{cpt_dim}, so the right-hand supremum of \Cref{eq:supax} vanishes. On the other hand, the vanishing of any of the dimensions for $A$ as a whole would require the existence of a single odd self-adjoint unitary
  \begin{equation*}
    v\in C(X)\otimes M_2. 
  \end{equation*}
  Each fiber $v_x\in A_x\cong M_2$ satisfies
  \begin{equation*}
    u_x v_x u_x^{-1} = -v_x,
  \end{equation*}
  so the $1$-eigenspace of $v_x$ will coincide with neither of the two eigenspaces of $u_x$ (with respective eigenvalues $\pm 1$). Identifying
  \begin{equation*}
    X=\bP^1\cong \bS^2,
  \end{equation*}
  said eigenspaces of $u_x$ are nothing but $x$ and its antipodal point $-x\in \bS^2$. The map
  \begin{equation*}
    \bS^2\cong \bP^1\ni x\xmapsto{\quad}\text{1-eigenspace of }v_x
  \end{equation*}
  is thus a self-map of the sphere that has no fixed points and sends no point to its antipode, and no such map exists by \cite[Corollary 1.24]{vick_hom_2e_1994}.
\end{example}

\begin{remark}\label{re:cp1}
  A somewhat different take on \Cref{ex:supstrict} will make it a simple matter to compute
  \begin{equation}\label{eq:p1dimis1}
    \wtrivdim{A}{\bG}
    =
    \wtrivdim{C(\bP^1)\otimes M_2}{\bZ/2}
    =
    1.
  \end{equation}
  Equip $\bP^1$ with its complex analytic structure or, equivalently for the purpose of manipulating holomorphic bundles, its complex projective algebraic structure (see \cite[\S 12]{ser_gaga}). The line bundle
  \begin{equation*}
    X=\bP^1\ni x
    \xmapsto{\quad}
    \text{$1$-eigenspace of }u_x
  \end{equation*}
  is the {\it universal subbundle} \cite[\S 3.2.3]{3264} on $\bP^1$ and hence what algebraic geometers denote by $\cO(-1)$ \cite[Definition preceding Proposition 5.12]{hrt}. Its orthogonal complement
  \begin{equation*}
    \bP^1\ni x
    \xmapsto{\quad}
    \text{$(-1)$-eigenspace of }u_x
  \end{equation*}
  in the trivial rank-2 bundle on $\bP^1$ is then $\cO(1)$, as the {\it degrees} \cite[Exercise II.6.12]{hrt} of the two bundles must add up to $0$. A choice of non-zero self-adjoint odd $v_x$ for $x$ ranging over some open $U\subseteq \bP^1$ means precisely an identification $\cO(-1)\cong \cO(1)$ over $U$, or a trivialization of
  \begin{equation*}
    \Hom(\cO(-1),\cO(1))|_U\cong \cO(2)|_U.
  \end{equation*}
  Such trivializations exist over {\it any} proper open subset of $\bP^1$ (e.g. the complement of a point), per the standard classification \cite[Theorem 2.1.1]{oss-vb} of line bundles on $\bP^1$. It follows that there are trivializations
  \begin{equation*}
    \cO(-2)|_{U_i}\cong \cO|_{U_i}
    ,\quad i=0,1,\quad
    U_i:=\bP^1\setminus\{p_i\}
    \quad\text{with}\quad p_0\ne p_1\in \bP^1
  \end{equation*}
  whose supports are respectively contained in $U_i$ and have interiors covering $\bP^1$. This gives the claimed \Cref{eq:p1dimis1}.
\end{remark}

A salient example of a $C(X)$-algebra is that of sections of an {\it $n\times n$ matrix-algebra bundle} $\cM$ on $X$ in the sense of \cite[Definition 18.1.3]{hjjm_bdle}: a vector bundle $\cM\to X$ with local trivializations $\cM|_U\cong U\times M_n$ respecting the $C^*$-algebra structures along the fibers. There will be no harm in allowing $n$ to vary, as $X$ simply decomposes into clopen subspaces over which $n$ is constant. 

Having to work with such trivializations extensively, we need some language to express the extent to which they exist. To that end, the following notions specialize the {\it (numerical) $\bG$-index} (\cite[Definition 3.13]{ziv_eqvrnt-1}, \cite[Definition 6.2.3]{mat_bu}) of a $\bG$-action on a space $X$, which is typically compact. That invariant can be recast via \cite[Theorem 3.4]{cf1} (where the term is {\it co-index} instead) as the smallest $n$ for which $X/\bG$ admits an open cover by $U_0, \ldots, U_n$, above which the action induces trivial principal $\bG$-bundles.

\begin{definition}\label{def:ix}
  \begin{enumerate}[(1),wide=0pt]
  \item\label{item:def:ix:vb} The {\it index} $\ix(\cF)$ of a vector bundle $\cF$ over a space $X$ (which is typically compact Hausdorff) is the smallest $m$ for which $X$ admits an open cover $X=\bigcup_{i=0}^m U_i$ trivializing each $\cF|_{U_i}$.

    More generally, for a family $(\cF_j)_j$ of vector bundles we write $\ix(\cF_j)_j$ for the smallest $m$ for which a cover as above trivializes {\it all} $\cF_j$ simultaneously.

  \item\label{item:def:ix:mb} In parallel, for a {\it matrix-algebra} bundle $\cM$ over $X$ the indices $\ix(\cM)$ or $\ix(\cM_j)_j$ are defined as in \Cref{item:def:ix:vb}, except that the trivializations $\cM_j|_{U_i}\cong U_i\times M_{n_j}$ are assumed to respect the $C^*$-algebra structures.
  \end{enumerate}
  To distinguish between the two notions of index we rely either on context (vector vs. matrix bundles) or, for added clarity, on subscripts: $\ix_{\cat{VB}}$ for \Cref{item:def:ix:vb} and $\ix_{\cat{MB}}$ for \Cref{item:def:ix:mb}. 
\end{definition}

\begin{remarks}\label{res:howspec}
  \begin{enumerate}[(1),wide=0pt]
  \item Parts \Cref{item:def:ix:vb} and \Cref{item:def:ix:mb} of \Cref{def:ix} are indeed instances of numerical $\bG$-indices (at least along the clopen subsets of $X$ where the ranks of the vector/matrix bundles involved are constant), with $\bG$ either a unitary or projective unitary group. As explained in \cite[Assertion 18.3.4]{hjjm_bdle}, rank-$n$ vector bundles and $n\times n$ matrix bundles ``are'' nothing but principal $U(n)$- and $PU(n)$-bundles, respectively.

  \item A $\bG$-action on a matrix bundle $\cM\to X$ induces one on $\Gamma(\cM)$ as a $C(X)$-algebra; conversely, a $\bG$-$C(X)$-algebra structure on $\Gamma(\cM)$ induces actions on the fibers $\cM_x = \Gamma(\cM)_x$ that glue together into an action on $\cM$. There is thus no harm in passing freely between the two types of action, as we will frequently in the sequel. 
  \end{enumerate}
\end{remarks}

We also need the following numerical invariants attached to finite-cyclic-group actions on matrix algebras or bundles.

\begin{definition}\label{def:quot}
  \begin{enumerate}[(1),wide=0pt]
  \item Consider an action $\bZ/k\times M_n\xrightarrow{\triangleright} M_n$ on the $C^*$-algebra $M_n$. By \cite[Example II.5.5.14]{blk}, a generator of $\bZ/k$ is necessarily  conjugation by a unitary $v\in M_n$, $v^k = I_n$, uniquely determined up to scaling by some power $\zeta^j$, $j\in \bZ/k$, of a primitive $k^{th}$ root of unity $\zeta$.

    The {\it quotient spread} of the action is
    \begin{equation}\label{eq:qt}
      \qt(\triangleright)
      :=
      \left\lceil
        \frac{\max\dim(v\text{-eigenspace})}{\min\dim(v\text{-eigenspace})}
      \right\rceil
    \end{equation}
    The aforementioned uniqueness of $v$ up to scaling makes the definition sound.

  \item Similarly, for a $\bZ/k$-action on a matrix bundle $\cM\to X$ or directly on a section $C(X)$-algebra $\Gamma(\cM)$, we have
    \begin{itemize}
    \item {\it fiber-wise quotient spreads} $\qt_x(\text{action})$ at $x\in X$, defined as in \Cref{eq:qt} for a unitary generator $v_x\in \Gamma(\cM)_x\cong M_{n_x}$ of the action;

    \item and a {\it global quotient spread} $\qt:=\sup_{x\in X}\qt_x$. 
    \end{itemize}
  \end{enumerate}
\end{definition}

A follow-up on \Cref{def:ix,def:quot}:

\begin{definition}\label{def:ixact}
  For a $\bZ/k$-action on a matrix bundle $\cM$ over $X$, the corresponding index $\ix(\cM,\triangleright) = \ix(\triangleright)$ is the smallest $m$ for which there is an open cover $X=\bigcup_{i=0}^{m} U_i$ with $\cM$ trivial, $\triangleright$ inner over each $U_i$ induced by some unitary $s_i\in \Gamma(\cM|_{U_i})$, $s_i^k=1$, with all eigenspaces of $s_i$ again trivial as bundles over $U_i$. 
\end{definition}

\begin{theorem}\label{th:zkmbact}
  Given a $\bZ/k$-action $\triangleright$ on a matrix bundle $\cM\to X$ over a compact Hausdorff space, we have
  \begin{equation}\label{eq:th:zkmbact:wineq}
    \qt - 1 \,\, \leq \,\, \wtrivdim{\Gamma(\cM)}{\bZ/k} \,\, \leq \,\,  \qt \cdot \left (\ix(\cM,\triangleright)+1\right) -1.
  \end{equation}
\end{theorem}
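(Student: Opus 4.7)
The plan is to prove the two bounds in \Cref{eq:th:zkmbact:wineq} independently, using the spectral-subspace characterization \Cref{pr:alt_dim_Z/k}: the weak local-triviality dimension is realized by collections of normal elements of the $\gamma$-spectral subspace of $\Gamma(\cM)$ with spectra in $\stars{k}$ and with $\sum_j T_jT_j^*$ invertible.

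\emph{Lower bound.} By \Cref{le:lesup} it suffices to show, for each $x\in X$, that $\wtrivdim{\cM_x}{\bZ/k}\ge \qt_x-1$. So fix a $\bZ/k$-action on $M_n$ implemented by a unitary $v$ with $v^k=I$, with $\zeta^c$-eigenspaces $V_c$ of dimensions $d_c$. An element $T\in A_\gamma$ decomposes into graded components $T^c:V_c\to V_{c+1}$, and the normality identity $TT^*=T^*T$ restricted to $V_c$ reads $T^{c-1}(T^{c-1})^*=(T^c)^*T^c$. This forces all $T^c$ to share a common rank $r_T$, and so $r_T\le d_{\min}$. Now for any candidates $T_0,\dots,T_N\in A_\gamma$ producing an invertible $\sum_j T_jT_j^*$, restriction to an eigenspace of maximal dimension $d_{\max}$ yields $d_{\max}\le\sum_j r_{T_j}\le (N+1)d_{\min}$, whence $N+1\ge\lceil d_{\max}/d_{\min}\rceil=\qt_x$.

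\emph{Upper bound.} Set $m=\qt$ and $\ell=\ix(\cM,\triangleright)$, and pick a cover $X=\bigcup_{i=0}^{\ell}U_i$ as in \Cref{def:ixact}, refined so that each $U_i$ is connected. Over $U_i$, the eigenspaces $V_c^{(i)}=\ker(s_i-\zeta^c)$ are then trivial bundles of constant ranks $d_c^{(i)}\le m\cdot d_{\min}^{(i)}$. Using the given trivializations, select rank-$d_{\min}^{(i)}$ coordinate sub-bundles $W_c^{(i,j)}\subseteq V_c^{(i)}$ (for $c\in\bZ/k$, $j\in\{0,\dots,m-1\}$) whose union spans $V_c^{(i)}$, and fix unitary identifications $W_c^{(i,j)}\cong W_{c+1}^{(i,j)}$. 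Define $T_{i,j}\in\Gamma(\cM|_{U_i})$ to act as the resulting cyclic shift on $\bigoplus_c W_c^{(i,j)}$ and as zero on the orthogonal complement. Each $T_{i,j}$ lies in $A_\gamma$, is normal with $T_{i,j}^k$ equal to the projection onto its support, and so has spectrum in $\{0,1,\zeta,\dots,\zeta^{k-1}\}\subseteq\stars{k}$. Moreover, $\sum_j T_{i,j}T_{i,j}^*$ restricts on each $V_c^{(i)}$ to the sum of projections $\sum_j P_{W_c^{(i,j)}}$, which is $\ge I$ on $U_i$ by the spanning property.

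\emph{Patching.} Take a partition of unity $\{\psi_i\}$ subordinate to $\{U_i\}$, set $\phi_i=\sqrt{\psi_i}$, and let $S_{i,j}=\phi_iT_{i,j}$, extended by zero outside $U_i$. Since $\phi_i\in C(X)\subseteq Z(M(\Gamma(\cM)))$ is central, real, and takes values in $[0,1]$, each $S_{i,j}$ inherits normality and membership in $A_\gamma$ and has spectrum in $[0,1]\cdot\stars{k}=\stars{k}$. A direct calculation gives
\[
\sum_{i,j} S_{i,j}S_{i,j}^*=\sum_i\phi_i^2\sum_jT_{i,j}T_{i,j}^*\ge\sum_i\phi_i^2\cdot I=I,
\]
which is invertible. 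This produces $(\ell+1)m=(\ix(\cM,\triangleright)+1)\qt$ elements, proving the upper bound. The main obstacle is in the local construction: choosing all the $W_c^{(i,j)}$ of equal rank $d_{\min}^{(i)}$ is precisely what turns each $T_{i,j}$ into a partial unitary whose $k$th power is a projection, which in turn is what forces its spectrum into $\stars{k}$.
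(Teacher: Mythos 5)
Your argument is correct and follows the same overall strategy as the paper's proof: the lower bound is obtained fiberwise via the equivariant surjections $\Gamma(\cM)\to\Gamma(\cM)_x$ (i.e.\ \Cref{le:lesup}), and the upper bound by constructing, over each chart $U_i$ of a cover witnessing $\ix(\cM,\triangleright)$, a family of $\qt$ normal cyclic-shift partial isometries in the generating spectral subspace whose ranges jointly cover every eigenbundle, and then gluing with a partition of unity. The one substantive difference is in the fiberwise lower bound: the paper simply cites \cite[Theorem 4.1]{cpt_dim} for $\wtrivdim{M_n}{\bZ/k}=\qt-1$, whereas you reprove the needed inequality from scratch via the observation that normality forces all graded components $T^c\colon V_c\to V_{c+1}$ of a spectral-subspace element to share a common rank bounded by $d_{\min}$; this is a clean, self-contained rank count, and it correctly handles the degenerate case $d_{\min}=0$, where every such element vanishes and the dimension is infinite. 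Two small cautions on your upper bound. First, you should not ``refine the cover so that each $U_i$ is connected'': this is not always possible over a general compact Hausdorff space and could in any case increase the number of charts beyond $\ix(\cM,\triangleright)+1$; it is also unnecessary, since the triviality of the eigenbundles over $U_i$ built into \Cref{def:ixact} already forces their ranks to be constant there. Second, both the inequality $\sum_j P_{W_c^{(i,j)}}\ge I$ and the identity $T_{i,j}^k=(\text{support projection})$ depend on the $W_c^{(i,j)}$ being \emph{coordinate} sub-bundles with the coordinate identifications: for general spanning subspaces the sum of projections is merely positive definite (not $\ge I$), and the holonomy around the cycle $W_0\to W_1\to\cdots\to W_0$ need not be trivial, which would push the spectrum of $T_{i,j}$ outside $\stars{k}$. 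That choice is therefore doing real work and deserves to be flagged explicitly, though as written your construction does make it.
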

\begin{proof}
First consider when $X$ is a singleton, so $\ix(\cM, \triangleright) = 0$ and $\cM$ is a matrix algebra $M_n$. The result \cite[Theorem 4.1]{cpt_dim} effectively proves that $\wtrivdim{M_n}{\bZ/k} = \qt - 1$. Although that discussion focuses on {\it gauge} actions on matrix algebras, the arguments therein still work for arbitrary conjugation actions by unitaries $v\in M_n$, $v^k=1$. 

Applying this result to general bundles shows that each individual fiber $\Gamma(\cM)_x \cong M_{n_x}$ satisfies $\wtrivdim{\Gamma(\cM)_x}{\bZ/k} = \qt_x - 1$. Because there is an equivariant map
\[ \Gamma(\cM) \xrightarrow{\quad} \Gamma(\cM)_x \]
for each $x \in X$, we obtain that
\[ \wtrivdim{\Gamma(\cM)}{\bZ/k} \,\, \geq \,\, \sup_{x \in X} \, \wtrivdim{\Gamma(\cM)_x}{\bZ/k} \,\,  = \,\, \sup_{x \in X} \, \qt_x - 1  \,\, = \,\, \qt - 1, \]
which establishes the lower bound on the weak dimension.

For the upper bound on the weak dimension, we may assume $\qt$ is finite, as otherwise the result is trivial. Also, define $\ix := \ix(\cM, \triangleright)$ for simplicity, and consider an open cover $X=\bigcup_{i=0}^{\ix} U_i$ witnessing \Cref{def:ixact}. That is, the open cover trivializes the matrix bundle $\cM$ and renders the action inner, by unitary sections $v_i\in \Gamma(\cM|_{U_i})$ with $v_i^k=1$, and with trivial $v_i$-eigenbundles
  \begin{equation*}
    \cE_{i,j}:=\ker(\zeta^j-v_i), 
    \quad \quad
    0\le i\le \ix,
    \quad 
    j\in \bZ/k.
  \end{equation*}
If $i$ is fixed, the various $\cE_{i,j}$ are uniquely determined up to cycling through the $j$ indices. 

Because $\qt$ is assumed finite, each $\cE_{i,j}$ has nonzero vector space dimension at each $x \in X$. If $i$ is fixed and $x \in U_i$ is fixed, consider the smallest such eigenspace $W_0 := (\cE_{i,j_0})_x$. It is possible to define a partial isometry on $M_{n_x}$ that maps $W_0$ isometrically onto a subspace $W_1 \subseteq (\cE_{i,j_0+1})_x$ (which is guaranteed to be orthogonal to $W_0$ because the eigenspaces are orthogonal), maps $W_1$ isometrically onto some $W_2 \subseteq (\cE_{i,j_0+2})_x$, and so on, eventually returning to $W_k = W_0 = (\cE_{i,j_0})_x$. Further, map every element of $\left( \bigoplus_{m=0}^{k-1} W_m \right)^\perp$ to $0$. This procedure produces a partial isometry $\alpha$ in the generating spectral subspace of the action of $\bZ/k$ on this fiber. In fact, $\alpha$ is normal, as the initial space and final space are both equal to the direct sum $\bigoplus_{m=0}^{k-1} W_m$, and it also follows from the construction that $\alpha^k$ is equal to the projection onto that same direct sum. The subspaces $W_m$ for $m > 0$ may be proper subspaces of the corresponding eigenspaces, but because $\qt$ is defined as the (largest) quotient spread, we need at most $\qt$ such operators $\alpha_1, \ldots, \alpha_{\qt}$ to guarantee full coverage of each eigenspace. That is, $\sum\limits_{s=1}^{\qt} \alpha_s^* \alpha_s$ is an invertible element of  $M_{n_x}$.

Now, consider that since the eigenbundles $\cE_{i,j}$ are trivial on $U_i$, we may apply this construction on each $U_i$, rather than at an individual point. This produces normal partial isometries $\alpha_{i,s} \in \Gamma(\cM|_{U_i})$, for $0 \leq i \leq \ix$ and $1 \leq s \leq \qt$, that map
  \begin{equation*}
    \cE_{i,j}
    \xrightarrow{\quad \alpha_{i,s}\quad}
    \cE_{i,j+1}
    ,\quad
    j\in \bZ/k
  \end{equation*}
and satisfy the conditions that each $\alpha_{i,s}^k$ is a projection and $\sum\limits_{s=1}^{\qt} \alpha_{i,s}^* \alpha_{i,s}$ is invertible on each $U_i$. As before, we have that the $\alpha_{i,s}$ are in the generating spectral subspace of the $\bZ/k$-action by design. However, each $\alpha_{i,s}$ is only a section on $U_i$, not the full space $X$.

Finally, we may fix a refinement $(V_i)_{i=0}^{\ix}$ of the cover $(U_i)_{i=0}^{\ix}$, with
  \begin{equation*}
    V_i = \accentset{\circ}{V_i}\subseteq \overline{V_i}\subseteq U_i.
  \end{equation*}  
A partition of unity argument (see \cite[Theorem 36.1]{mnk}) allows us to rescale the operators $\alpha_{i,s} \in \Gamma(\cM|_{U_i})$ into global sections $\phi_i^{1/2} \alpha_{i,s}\in \Gamma(M)$, where the $\phi_i$ are nonnegative scalar-valued continuous functions on $X$, supported in $U_i$, with $\phi_i|_{V_i}\equiv 1$ and $\sum\limits_{i = 0}^{\ix} \phi_i \equiv 1$. Note that each $\phi_i$ is a central element of $\Gamma(\cM)$ and is fixed by the action of $\bZ/k$, so multiplying by $\phi_i^{1/2}$ has not removed any of the key properties of $\alpha_{i,s}$. Because $1 \leq s \leq \qt$ and $0 \leq i \leq \ix$, we have produced a list of $\qt\cdot(\ix + 1)$ normal operators $\alpha_{i,s}$ in the generating spectral subspace of the $\bZ/k$-action such that $\sum\limits_{i,s} \alpha_{i,s}^*\alpha_{i,s}$ is (globally) invertible. By \Cref{pr:alt_dim_Z/k}, we have that 
\[ \wtrivdim{\Gamma(\cM)}{\bZ/k} \,\, \leq \,\,  \qt \cdot \left (\ix+1\right) -1.\]

\end{proof}

Just as \cite[Theorem 4.1]{cpt_dim} implies the freeness criterion of \cite[Corollary 4.2]{cpt_dim} for gauge actions, \Cref{th:zkmbact} delivers a matrix-bundle analogue (and generalization).

\begin{corollary}\label{cor:wdimfinfree}
  For a $\bZ/k$-action $\triangleright$ on a matrix bundle $\cM\to X$ over a compact Hausdorff space, the following conditions are equivalent.
  \begin{enumerate}[(a)]  

  \item\label{item:cor:wdimfinfree:nontriv} At every $x$, a unitary $v_x$, $v_x^k=1$ inducing the $\bZ/k$-action has full spectrum $\bZ/k\subset \bS^1$. 
    
  \item\label{item:cor:wdimfinfree:wtrivfin} $\wtrivdim{\Gamma(\cM)}{\bZ/k}<\infty$.  

  \item\label{item:cor:wdimfinfree:free} The action induced by $\triangleright$ on $\Gamma(\cM)$ is free.    
  \end{enumerate}
\end{corollary}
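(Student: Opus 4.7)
The natural approach is a cycle of implications. The step (b)$\Rightarrow$(c) is immediate from \cite[Theorem 3.8]{hajacindex_xv2}: finiteness of the weak local-triviality dimension forces freeness of the coaction, and we have just observed that freeness is characterized by the density condition $\overline{A_\gamma A A_\gamma^*}=A$.

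For (c)$\Rightarrow$(a), I would argue that freeness descends to each fiber. The ideals $J_x$ are $\bZ/k$-invariant because the action fixes the central $C(X)$, so the Peter-Weyl decomposition into spectral subspaces commutes with the quotient $\Gamma(\cM)\twoheadrightarrow \Gamma(\cM)_x\cong M_{n_x}$ and the density characterization of freeness survives to each fiber. For a conjugation action by $v_x\in M_{n_x}$ with $v_x^k=1$, freeness is equivalent to $v_x$ having full spectrum $\{1,\zeta,\ldots,\zeta^{k-1}\}$: each spectral subspace $(M_{n_x})_{\gamma^j}$ consists of matrices carrying the $\zeta^i$-eigenspace $E_i$ into $E_{i+j}$, and $(M_{n_x})_\gamma (M_{n_x})_\gamma^*$ cannot exhaust the fixed-point block diagonal $\bigoplus_i \End(E_i)$ whenever some $E_i$ is trivial. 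This is exactly (a).

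For (a)$\Rightarrow$(b), I would invoke \Cref{th:zkmbact} after checking that both $\qt$ and $\ix(\cM,\triangleright)$ are finite. Condition (a) gives $\qt_x\leq n_x$ at every point (all eigenspaces of $v_x$ have dimension at least one), and the locally constant function $x\mapsto n_x$ is bounded on compact $X$, so $\qt<\infty$. For $\ix(\cM,\triangleright)<\infty$, at each $x_0\in X$ I would select a neighborhood $U$ trivializing $\cM$; the action restricts to a continuous map $U\to PU(n)$ of order dividing $k$, which on a sufficiently small simply connected subneighborhood lifts to $U(n)$ (via the circle covering $U(n)\to PU(n)$). The lift is then rescaled by a continuous $k$-th root of its $k$-th power, an $\bS^1$-valued scalar section, to produce an order-$k$ unitary implementer $s$ with $s^k=1$. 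Because $s^k=1$, the spectrum of $s$ is locally constant, so the eigenbundles are honest vector subbundles which trivialize on a further shrinking. Compactness of $X$ then extracts a finite subcover witnessing $\ix(\cM,\triangleright)<\infty$, and the upper bound $\wtrivdim{\Gamma(\cM)}{\bZ/k}\leq \qt\cdot(\ix(\cM,\triangleright)+1)-1$ from \Cref{th:zkmbact} closes the argument.

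The principal obstacle is verifying $\ix(\cM,\triangleright)<\infty$: the continuous $k$-th root step requires that certain local $\bS^1$-valued cocycles be trivializable on the chosen patches, which in turn demands that the patches be shrunk to be simply connected (or otherwise $H^1$-trivial). The subsequent trivialization of eigenbundles is then a routine application of the local triviality of vector bundles over compact Hausdorff spaces, but the simultaneous arrangement of bundle trivialization, order-$k$ lift, and eigenspace trivialization is the delicate part of the argument.
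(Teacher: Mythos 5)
Your cycle of implications is the same as the paper's: (a)$\Rightarrow$(b) via the upper bound of \Cref{th:zkmbact}, (b)$\Rightarrow$(c) by \cite[Theorem 3.8]{hajacindex_xv2}, and (c)$\Rightarrow$(a) by passing to a fiber and observing that a product of spectral subspaces misses part of the algebra when some eigenspace of $v_x$ vanishes. (The paper phrases the last step via the strong-grading criterion $A_{\zeta^j}A_{\zeta^{-j}}=A_1$ of Montgomery rather than the density condition $\overline{A_\gamma A A_\gamma^*}=A$, but the fiberwise computation is identical, and your reduction to fibers via invariance of $J_x$ is sound.)

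The one place you go beyond the paper is in proving $\ix(\cM,\triangleright)<\infty$, which the paper simply asserts, and your argument for it has a genuine wobble. First, $U(n)\to PU(n)$ is a principal $\bS^1$-bundle, not a covering map, so simple connectivity of a patch is not the right hypothesis for lifting a map $U\to PU(n)$; you would need the patch to have trivial $H^2$ (or to be contractible), and --- more seriously --- a general compact Hausdorff space need not admit simply connected or contractible neighborhoods at all, so ``shrink to a simply connected subneighborhood'' is not an available move in the stated generality. The step is repairable without any topological hypothesis on $X$: fix $x_0$ and a unitary $v_0$ with $v_0^k=1$ implementing the action at $x_0$; for $x$ near $x_0$ the automorphism $\alpha_x\circ\alpha_{x_0}^{-1}$ lies in a neighborhood of the identity of $PU(n)$ over which the bundle $U(n)\to PU(n)$ has a continuous local section, so $\alpha_x=\mathrm{Ad}(s(x))$ for a continuous unitary $s$ with $s(x_0)=v_0$; then $s(x)^k$ is a scalar in $\bS^1$ close to $1$, where a continuous branch of the $k$th root exists, and dividing by it produces the desired order-$k$ implementer. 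After that, your continuity-of-eigenprojections and compactness arguments go through as written. With that correction the proposal is complete and matches the paper's proof in all essentials.
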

\begin{proof}
  The implication \Cref{item:cor:wdimfinfree:nontriv} $\Longrightarrow$ \Cref{item:cor:wdimfinfree:wtrivfin} follows from the upper bound \Cref{eq:th:zkmbact:wineq} on $\wtrivdim{\Gamma(\cM)}{\bZ/k}$, where we note that $\ix(\cM, \triangleright)$ is always finite, and the full spectrum condition of \Cref{item:cor:wdimfinfree:nontriv} guarantees that the definition of $\qt$ does not involve dividing by zero. Moreover, \Cref{item:cor:wdimfinfree:wtrivfin} $\Longrightarrow$ \Cref{item:cor:wdimfinfree:free} follows from the general result \cite[Theorem 3.8]{hajacindex_xv2}. As for \Cref{item:cor:wdimfinfree:free} $\Longrightarrow$ \Cref{item:cor:wdimfinfree:nontriv}, freeness means by \cite[Theorem 8.1.7]{montg_halg} that the $\bZ/k$-grading on $A:=\Gamma(\cM)$ induced by the action is {\it strong}:
  \begin{equation*}
    A_{\zeta^j} A_{\zeta^{-j}} = A_1,
    \quad\forall j\in \bZ/k.
  \end{equation*}
  It is enough to focus on a single fiber at $x\in X$: the claim is that the $\bZ/k$-grading induced by the conjugation action of a unitary $v\in M_n$ with $v^k=1$ is strong precisely when the $\zeta^j$-eigenspaces of $v$ are all non-zero for $j\in \bZ/k$. This is easily seen: if, say,
  \begin{equation*}
    \ker(\zeta^j-u) = \{0\} \ne \ker(\zeta^{j+1}-u),
  \end{equation*}
  then the orthogonal projection onto the latter (non-zero) space annihilates $M_{n,\zeta}M_{n,\zeta^{-1}}$.
\end{proof}

As for {\it plain} LT dimensions, \Cref{th:zkmbact-s} below is to \cite[Theorem 4.4]{cpt_dim} what \Cref{th:zkmbact} is to \cite[Theorem 4.1]{cpt_dim}. This is in the same spirit of ``spreading'' the results of \cite[\S 4]{cpt_dim} over a base space $X$.

\begin{theorem}\label{th:zkmbact-s}
  For a $\bZ/k$-action $\triangleright$ on a matrix bundle $\cM\to X$ over a compact Hausdorff space, the following conditions are equivalent.

  \begin{enumerate}[(a)]
  \item\label{item:th:zkmbact-s:specbd} $\trivdim{\Gamma(\cM)}{\bZ/k}\le \ix(\cM,\triangleright)$.

  \item\label{item:th:zkmbact-s:infty} $\trivdim{\Gamma(\cM)}{\bZ/k}<\infty$.

  \item\label{item:th:zkmbact-s:equidim} At every $x\in X$, a unitary $v_x\in \cM_x$ with $v_x^k=1$ inducing the action has equidimensional $\zeta^j$-eigenspaces for $j\in \bZ/k$. 
  \end{enumerate}
\end{theorem}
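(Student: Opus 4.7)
The plan is to prove the cycle \Cref{item:th:zkmbact-s:specbd} $\Rightarrow$ \Cref{item:th:zkmbact-s:infty} $\Rightarrow$ \Cref{item:th:zkmbact-s:equidim} $\Rightarrow$ \Cref{item:th:zkmbact-s:specbd}. The first implication is essentially definitional once I verify that $\ix(\cM,\triangleright)$ is always finite for compact Hausdorff $X$: around each $x$ the matrix bundle is locally trivial, the action can be rendered inner by a continuous unitary section $s$ with $s^k = 1$, and the resulting eigenbundles of $s$ are locally trivial; compactness then yields a finite cover realizing \Cref{def:ixact}.

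For \Cref{item:th:zkmbact-s:infty} $\Rightarrow$ \Cref{item:th:zkmbact-s:equidim}, I would feed the hypothesis through \Cref{pr:alt_dim_Z/k} to extract normal $a_0, \ldots, a_n \in \Gamma(\cM)_\gamma$ with $\sigma(a_i) \subseteq \stars{k}$ and $\sum_i a_i a_i^* = 1$. Fixing $x \in X$, I would pass to $\cM_x \cong M_{n_x}$ and decompose each $(a_i)_x$ along the $\zeta^j$-eigenspaces $\cE_j$ of $v_x$, writing $(a_i)_x = \sum_j b_{i,j}$ with $b_{i,j}\colon \cE_j \to \cE_{j+1}$. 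The identity $\sum_i a_i a_i^* = I$ reads on $\cE_{j+1}$ as $\sum_i b_{i,j} b_{i,j}^* = I_{\cE_{j+1}}$, so taking traces gives $\sum_i \|b_{i,j}\|^2_{\mathrm{HS}} = \dim \cE_{j+1}$ for every $j$. Normality $a_i a_i^* = a_i^* a_i$ restricted to $\cE_\ell$ collapses to $b_{i,\ell-1} b_{i,\ell-1}^* = b_{i,\ell}^* b_{i,\ell}$; in particular these operators share a trace, so $c_i := \|b_{i,j}\|^2_{\mathrm{HS}}$ is independent of $j$. Combining, $\sum_i c_i = \dim \cE_{j+1}$ is forced to be independent of $j$, establishing equidimensionality of the eigenspaces.

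For \Cref{item:th:zkmbact-s:equidim} $\Rightarrow$ \Cref{item:th:zkmbact-s:specbd}, I would mimic the proof of \Cref{th:zkmbact}, using equidimensionality to upgrade the local partial isometries there to honest unitaries. Fix a cover $(U_i)_{i=0}^{\ix}$ witnessing $\ix := \ix(\cM, \triangleright)$ with $\cM$ trivial on each $U_i$, $\triangleright|_{U_i}$ inner by a unitary $v_i$ with $v_i^k = 1$, and trivial eigenbundles $\cE_{i,j}$ on $U_i$. By hypothesis the $\cE_{i,j}$ share a common rank as $j$ varies, so aligned trivializations $\cE_{i,j} \cong U_i \times \bC^{d_i}$ determine a unitary $\alpha_i \in \Gamma(\cM|_{U_i})$ cyclically sending $\cE_{i,j}$ onto $\cE_{i,j+1}$; it automatically satisfies $\alpha_i^k = 1$ and lies in the $\gamma$-spectral subspace. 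Choose a subordinate partition of unity $(\phi_i)_{i=0}^{\ix}$ with $\sum_i \phi_i \equiv 1$ and form $\beta_i := \phi_i^{1/2} \alpha_i \in \Gamma(\cM)$ (extended by zero off $U_i$). Since $\phi_i^{1/2}$ is central and real-valued in $[0,1]$, $\beta_i$ is normal with $\sigma(\beta_i) \subseteq [0,1] \cdot \sigma(\alpha_i) \subseteq \stars{k}$, lies in $\Gamma(\cM)_\gamma$, and satisfies $\sum_i \beta_i \beta_i^* = \sum_i \phi_i \cdot 1 = 1$. \Cref{pr:alt_dim_Z/k} then delivers $\trivdim{\Gamma(\cM)}{\bZ/k} \le \ix$.

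The main obstacle is the block-decomposition bookkeeping in the middle implication: one must translate normality of each $a_i$ into the Hilbert-Schmidt equalities $\|b_{i,\ell-1}\|_{\mathrm{HS}} = \|b_{i,\ell}\|_{\mathrm{HS}}$ (by comparing $b_{i,\ell-1} b_{i,\ell-1}^*$ and $b_{i,\ell}^* b_{i,\ell}$ as operators on the same space $\cE_\ell$) and then match these with the trace of the identity $\sum_i a_i a_i^* = 1$ restricted to each eigenspace. The other two implications are routine, with the final step essentially a refinement of the construction behind \Cref{th:zkmbact}.
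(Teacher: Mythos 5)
Your proposal is correct and follows the same route as the paper: \Cref{item:th:zkmbact-s:specbd} $\Rightarrow$ \Cref{item:th:zkmbact-s:infty} via finiteness of $\ix(\cM,\triangleright)$, \Cref{item:th:zkmbact-s:infty} $\Rightarrow$ \Cref{item:th:zkmbact-s:equidim} by reduction to a single fiber, and \Cref{item:th:zkmbact-s:equidim} $\Rightarrow$ \Cref{item:th:zkmbact-s:specbd} by upgrading the partial isometries from the proof of \Cref{th:zkmbact} to unitaries and patching with a partition of unity. The only difference is cosmetic: you prove the fiberwise equidimensionality directly with a block-decomposition and Hilbert--Schmidt/trace computation, whereas the paper simply cites the $X=\{*\}$ case from \cite[Theorem 4.4]{cpt_dim}.
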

\begin{proof}
  The implication \Cref{item:th:zkmbact-s:specbd} $\Longrightarrow$ \Cref{item:th:zkmbact-s:infty} is formal, and \Cref{item:th:zkmbact-s:infty} $\Longrightarrow$ \Cref{item:th:zkmbact-s:equidim} follows from the proof of \cite[Theorem 4.4]{cpt_dim}, which provides the $X=\{*\}$ version. We thus turn to \Cref{item:th:zkmbact-s:equidim} $\Longrightarrow$ \Cref{item:th:zkmbact-s:specbd}. 

  As in the proof of \Cref{th:zkmbact}, consider an open cover $X=\bigcup_{i=0}^{\ix}U_i$ trivializing $\cM$, generators $v_i$ for the $\bZ/k$-action restricted to $U_i$, and the eigenspaces $\cE_{i,j}$, $j\in \bZ/k$ of $v_i$. We can construct operators $\alpha_{i,s}$ as in the earlier proof, with the only difference being that now we can select a {\it single}, {\it unitary} $\alpha_i \in \Gamma(\cM|_{U_i})$ for each $i$. The same partition of unity argument will produce a list of $\ix(\cM,\triangleright) + 1$ operators in the vein of \Cref{pr:alt_dim_Z/k}, demonstrating that $\trivdim{\Gamma(\cM)}{\bZ/k} \leq \ix(\cM,\triangleright)$.
\end{proof}

The case $X=\{*\}$ of \Cref{th:zkmbact-s}, treated in \cite[Theorem 4.4]{cpt_dim}, is not sufficiently refined to distinguish between ordinary and strong LT dimensions. Contrast this to \Cref{ex:supstrict}, which meets the requirements of \Cref{th:zkmbact-s} and hence has finite LT (so also weak LT) dimension: 


\begin{proposition}\label{pr:s2infsdim}
  For the $\bZ/2$-action on $A:=C(\bP^1)\otimes M_2$ of \Cref{ex:supstrict} we have
  \begin{equation*}
    \wtrivdim{A}{\bZ/2} = \trivdim{A}{\bZ/2} = 1
    \quad\text{and}\quad
    \strivdim{A}{\bZ/2}=\infty.
  \end{equation*}
\end{proposition}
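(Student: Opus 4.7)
The plan is to handle the weak and plain LT dimensions together via \Cref{th:zkmbact-s}, and then argue separately that $\strivdim=\infty$ by a line-bundle obstruction on $\bP^1$.

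For the weak and plain dimensions, the lower bound $\wtrivdim{A}{\bZ/2}, \trivdim{A}{\bZ/2}\ge 1$ is already established in \Cref{ex:supstrict}: no odd self-adjoint unitary exists in $A$, so neither dimension vanishes. For the upper bound I would apply \Cref{th:zkmbact-s} to the trivial matrix bundle $\cM := \bP^1\times M_2$ equipped with the given $\bZ/2$-action. Condition \Cref{item:th:zkmbact-s:equidim} holds because each $u_x = 2P_x-1$ has equidimensional $\pm 1$-eigenspaces $L_x$ and $L_x^\perp$. Moreover $\ix(\cM,\triangleright)\le 1$: by the standard classification of line bundles on $\bP^1$ (already invoked in \Cref{re:cp1}), both eigenbundles $\cO(-1)$ and $\cO(1)$ trivialize over any proper open subset of $\bP^1$, so the two-element cover $U_i := \bP^1\setminus\{p_i\}$ with $p_0\ne p_1$ simultaneously trivializes $\cM$ and both eigenbundles. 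Hence $\trivdim{A}{\bZ/2}\le 1$, and combined with $\wtrivdim\le\trivdim$ we conclude both equal $1$.

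For the strong dimension I would argue by contradiction. Assume $\strivdim{A}{\bZ/2}\le n<\infty$; by \Cref{pr:alt_dim_Z/k} applied to $k=2$, there are commuting self-adjoint odd elements $a_0,\ldots,a_n\in A$ with $\sum_i a_i^2 = 1$. At each $x\in\bP^1$, an odd self-adjoint operator on $\bC^2$ swaps $L_x$ with $L_x^\perp$ in adjoint fashion, hence is determined by the restriction $a_i|_{L_x}\in\Hom_{\bC}(L_x,L_x^\perp)$. These $\Hom$-spaces assemble globally into the complex line bundle $\cO(-1)^{\vee}\otimes\cO(1)\cong\cO(2)$ on $\bP^1$, so each $a_i$ corresponds to a continuous section $b_i\in\Gamma(\cO(2))$. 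A direct $2\times 2$ computation in a basis adapted to $L_x\oplus L_x^\perp$ (where odd self-adjoints take the form $\begin{pmatrix}0&\beta\\\bar\beta&0\end{pmatrix}$) shows that $\sum_i a_i^2=1$ translates into $\sum_i|b_i|^2\equiv 1$ for the natural Hermitian metric on $\cO(2)$, and that $a_i,a_j$ commute precisely when $b_i(x),b_j(x)\in\cO(2)_x$ are $\bR$-collinear at every $x$.

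The obstruction is then visible in the section
\[
s \,\,:=\,\, \sum_i b_i\otimes b_i \,\,\in\,\, \Gamma(\cO(2)^{\otimes 2}) \,\,=\,\, \Gamma(\cO(4)).
\]
I claim $s$ is nowhere vanishing: at each $x$ write $b_i(x)=\lambda_i(x)w(x)$ with $\lambda_i(x)\in\bR$ and $w(x)\in\cO(2)_x$, which we may take nonzero since $\sum_i|b_i(x)|^2 = 1$ forbids simultaneous vanishing; then $s(x)=\left(\sum_i\lambda_i(x)^2\right)w(x)^{\otimes 2}$ is a positive real multiple of $w(x)^{\otimes 2}\ne 0$ (and the product is independent of the ambiguity $w\rightsquigarrow cw$, $c\in\bR^\times$). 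But $\cO(4)$ on $\bP^1\cong\bS^2$ has first Chern class $4\ne 0$, so it admits no nowhere-vanishing global continuous section, a contradiction. The main obstacle I anticipate is phrasing the identification between odd self-adjoint operators and $\cO(2)$ invariantly enough that the pointwise normalization and the nonvanishing of $s$ are manifestly intrinsic rather than artifacts of local trivializations.
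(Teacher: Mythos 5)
Your argument for $\wtrivdim{A}{\bZ/2}=\trivdim{A}{\bZ/2}=1$ is exactly the paper's: the positivity comes from \Cref{ex:supstrict}, and the upper bound from \Cref{th:zkmbact-s} with $\ix(\cM,\triangleright)=1$ witnessed by two punctured copies of $\bP^1$. For $\strivdim{A}{\bZ/2}=\infty$, however, you take a genuinely different and, as far as I can tell, correct route. The paper sends $x$ to the unordered pair of eigenspaces of any non-vanishing $a_i(x)$, obtaining a map $\bS^2\to\bR\bP^2$ that never agrees with the canonical projection; lifting to $\bS^2$ (possible by simple connectivity) produces a self-map of $\bS^2$ with neither fixed points nor antipodal coincidences, contradicting the classical coincidence theorem. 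You instead encode each odd self-adjoint $a_i$ as a section $b_i$ of $\Hom(\cO(-1),\cO(1))\cong\cO(2)$, observe that self-adjointness plus commutativity forces the $b_i(x)$ to be pairwise $\bR$-collinear, and conclude that $s=\sum_i b_i\otimes b_i$ is a nowhere-vanishing section of $\cO(4)$, impossible since $c_1(\cO(4))=4$. The key points all check: in an adapted orthonormal frame the odd self-adjoints are $\left(\begin{smallmatrix}0&\beta\\\bar\beta&0\end{smallmatrix}\right)$, commutativity is $\beta\bar\gamma\in\bR$, and real collinearity is what prevents the squares $\lambda_i^2$ from cancelling in $s(x)=\bigl(\sum_i\lambda_i^2\bigr)w(x)^{\otimes 2}$ --- note that mere $\bC$-collinearity would not suffice here, so the use of self-adjointness is essential and correctly placed. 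Your worry about intrinsicness is unfounded: $s$ is defined globally without choices, and the pointwise factorization through $w(x)$ is used only to verify non-vanishing. What each approach buys: the paper's is more elementary (a Borsuk--Ulam-flavored coincidence theorem on $\bS^2$, no bundle theory beyond what \Cref{ex:supstrict} needs), while yours exhibits a concrete characteristic-class obstruction and meshes naturally with the $\cO(\pm1)$ bookkeeping already set up in \Cref{re:cp1}; it also makes transparent exactly where commutativity enters, namely in upgrading $\bC$-lines to $\bR$-lines so that the squares cannot cancel.
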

\begin{proof}
  \Cref{ex:supstrict} already argues that none of the dimensions can vanish, and the upper bound of $1$ for the ordinary (hence also weak) LT dimension follows from \Cref{th:zkmbact-s} item \Cref{item:th:zkmbact-s:specbd} by the selfsame example: $\ix(\cM,\triangleright)=1$, since a cover by two punctures of $\bP^1$ trivializes everything in sight. 
  
  As for the infinitude of the strong LT dimension, recall first that the action at $x\in \bP^1$ is conjugation by the self-adjoint (hence involutive) unitary having $x$ (regarded as a line in $\bC^2$) and the orthogonal complement $x^{\perp}\le \bC^2$ as its $1$- and $(-1)$-eigenspaces, respectively.

  Consider commuting self-adjoint odd elements
  \begin{equation*}
    a_i\in A\cong C(\bP^1, M_2)
  \end{equation*}
  witnessing finite strong LT dimension. At each $x\in \bP^1$, at least one $a_i$ will be non-vanishing, and the commutation assumption ensures that at every $x$, the unordered pair of mutually-orthogonal $a_i(x)$-eigenspaces (corresponding to the positive and negative eigenvalue of $a_i$ respectively) is $i$-independent so long as $a_i(x)\ne 0$.

  We thus have a continuous map
  \begin{equation}\label{eq:xtopair}
    \begin{aligned}
      \bP^1\cong \bS^2
      \quad\ni\quad
      x
      \xmapsto{\quad}
      \big(\text{pair of $a_{i}(x)$-eigenspaces}\big)
      \quad&\in\quad
             \bS^2/(\text{antipodal $\bZ/2$})\\
           &\cong
             \quad\text{real projective plane }\bR\bP^2.
    \end{aligned}
  \end{equation}
  Because (by oddness) each eigenspace of $a_i$ is orthogonal to each eigenspace of $u$, \Cref{eq:xtopair} coincides with the canonical surjection $\bS^2\to \bR\bP^2$ {\it nowhere}. A continuous lift $\bS^2\to \bS^2$ thereof (which does exist by \cite[Lemma 79.1]{mnk}, as $\bS^2$ is simply-connected) will thus have no coincidences with either the identity or the antipodal map, once more violating \cite[Corollary 1.24]{vick_hom_2e_1994}.
\end{proof}

The following semicontinuity result for the weak local-triviality dimension of $\bZ/2^m$-actions applies whether or not a given field is continuous.

\begin{theorem}\label{th:semicont}
  Let $A$ be a unital $(\bZ/2^m)$-$C(X)$-algebra for some $m \in \bZ^+$. The function
  \begin{equation*}
    X\ni x\xmapsto{\quad} \wtrivdim{A_x}{\bZ/2^m}\in \bZ_{\ge 0}\sqcup \{\infty\}
  \end{equation*}
  is upper semicontinuous.
\end{theorem}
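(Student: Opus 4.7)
The plan is to use the elementwise characterisation of \Cref{pr:alt_dim_Z/k} and argue in two moves: (i) lift a fibre-wise witnessing tuple to the global algebra preserving normality and the spectral condition, and (ii) use upper semicontinuity of the norm function on a $C(X)$-algebra to propagate invertibility of the witness sum to a neighbourhood. Fix $x_0 \in X$ with $n_0 := \wtrivdim{A_{x_0}}{\bZ/2^m}$, and select normal $a_0, \ldots, a_{n_0} \in (A_{x_0})_\gamma$ with $\sigma(a_i) \subseteq \stars{2^m}$ and $\sum_i a_i a_i^*$ invertible. The goal is to produce such a tuple in $A_x$ for every $x$ in some open neighbourhood of $x_0$.

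The main step is to lift each $a_i$ to a normal $\tilde{a}_i \in A_\gamma$ with $\sigma(\tilde{a}_i) \subseteq \stars{2^m}$ and $\pi_{x_0}(\tilde{a}_i) = a_i$. Equivalently (\Cref{def:WLT}), one must lift the equivariant $*$-homomorphism $C_0((0,1])\otimes C(\bZ/2^m)\cong C_0(\stars{2^m})\to A_{x_0}$ determined by $a_i$ along the equivariant quotient $A\twoheadrightarrow A_{x_0}$. For the base case $m=1$ this is elementary: elements of $(A_{x_0})_\gamma$ with spectrum in $[-1,1]$ are odd self-adjoint contractions, so one averages an arbitrary lift into $A_\gamma$, takes the self-adjoint part, and applies continuous functional calculus to the odd clamping function $f(t) = \max(-1, \min(t, 1))$, which preserves both oddness and self-adjointness. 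For $m \ge 2$, elements of $A_\gamma$ need not be self-adjoint and the lifting is the genuinely delicate piece. One approach is to exploit $\bZ/2^m$-equivariant contractibility of the asterisk $\stars{2^m}$ via the radial retraction $(z,t)\mapsto tz$ (commuting with the rotation action), which yields equivariant projectivity of $C_0(\stars{2^m})$ and hence automatic lifting; alternatively, one proceeds explicitly by lifting the cyclic decomposition $a_i = \sum_{j=0}^{2^m-1} e(j/2^m)\, p_{i,j}$ into positive pieces cyclically permuted by the action, and then adjusting via functional calculus to restore orthogonality of the $2^m$ translates — a step that specifically uses the power-of-2 structure, via the self-adjoint apparatus afforded by the $\bZ/2$ subgroup $\langle 2^{m-1}\rangle$.

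Once global lifts $\tilde{a}_i \in A_\gamma$ are in hand, set $c := \sum_i \tilde{a}_i \tilde{a}_i^* \in A$. Since $\pi_{x_0}(c)$ is positive and invertible, there exists $s > 0$ with $\|\pi_{x_0}(c - s\cdot 1)\| < s$. The function $x \mapsto \|\pi_x(c - s\cdot 1)\|$ is upper semicontinuous on $X$ — a standard property of $C(X)$-algebras that does not require the field to be continuous — so $U := \{x : \|\pi_x(c - s\cdot 1)\| < s\}$ is an open neighbourhood of $x_0$ on which $\pi_x(c)$ is invertible, by the usual Neumann-series argument. The tuple $(\pi_x(\tilde{a}_0), \ldots, \pi_x(\tilde{a}_{n_0}))$ then witnesses $\wtrivdim{A_x}{\bZ/2^m} \le n_0$ via \Cref{pr:alt_dim_Z/k} for every $x \in U$.

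The hardest step, and the main obstacle, is the equivariant lifting; the propagation via norm upper semicontinuity and the $m=1$ lift are standard. The specific hypothesis $\bZ/2^m$ is what makes the spectrum-preserving lift tractable, whether one routes it through equivariant projectivity of the cone $C_0(\stars{2^m})$ or through the order-two reduction afforded by the unique element of order $2$ in $\bZ/2^m$.
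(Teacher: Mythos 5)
Your overall architecture coincides with the paper's: lift the fibre-wise witnesses equivariantly to $A$, then use upper semicontinuity of $x\mapsto\|\cdot\|_x$ (valid for any $C(X)$-algebra, continuous field or not) to propagate invertibility of the lifted sum to a neighbourhood of $x_0$. Your second half is correct and essentially identical to the paper's, which works with $\|ab-1\|_y<1$ and $\|ba-1\|_y<1$ for a lift $b$ of $(\pi_{x_0}(a))^{-1}$; your $\|c-s\cdot 1\|_x<s$ variant is interchangeable with that.

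The gap is in the lifting step, which you correctly flag as the crux but do not close. Your first route --- that $\bZ/2^m$-equivariant contractibility of $\stars{2^m}$ via the radial retraction ``yields equivariant projectivity of $C_0(\stars{2^m})$ and hence automatic lifting'' --- proves too much: the radial retraction is equivariant for \emph{every} $\bZ/k$, so if equivariant contractibility of the spectrum formally implied equivariant projectivity, the theorem would follow for all finite cyclic groups and the hypothesis $2^m$ would be vacuous. The passage from a contractible spectrum to $C^*$-projectivity is not formal, and equivariantly the obstruction is exactly the point at issue: after averaging an arbitrary lift into the spectral subspace $A_\gamma$, one must restore normality and the condition $\sigma\subseteq\stars{2^m}$, and for $k\ge 3$ the subspace $A_\gamma$ is not closed under the adjoint, so there is no self-adjoint-part-plus-odd-functional-calculus shortcut as in your $m=1$ case. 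Your second route (lifting the positive pieces of the cyclic decomposition and ``adjusting via functional calculus to restore orthogonality'') points at the right mechanism --- exploiting the order-$2$ subquotients of $\bZ/2^m$ --- but restoring simultaneous orthogonality of the lifted positive elements while keeping them cyclically permuted by the action \emph{is} the content of the equivariant projectivity statement, and the sketch does not supply it. The paper does not prove this from scratch either: it cites equivariant projectivity of $C_0((0,1])\otimes C(\bZ/2^m)$ via \cite[Proposition 2.10]{Semiprojective}, as applied in the proof of \cite[Proposition 5.4]{cpt_dim}. That citation, or an actual argument replacing it, is the missing ingredient in your write-up.
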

\begin{proof}
  We have to argue that for every $n \in \bZ_{\ge 0}$, the set
  \begin{equation*}
    \left\{x\in X\ |\ \wtrivdim{A_x}{\bZ/2^m}\le n\right\}\subseteq X
  \end{equation*}
  is open. Having fixed a point $x$ with $\wtrivdim{A_x}{\bZ/2^m}\le n$, consider the morphisms given by \Cref{def:WLT}:
  \begin{equation*}
    C_0((0,1])\otimes C(\bZ/2^m)\xrightarrow{\quad\rho_i\quad} A_x,\quad 0\le i\le n
  \end{equation*}
  with $\sum\limits_{i=0}^n\rho_i(t\otimes 1)$ invertible. The $\rho_i$ lift equivariantly to
  \begin{equation*}
    C_0((0,1])\otimes C(\bZ/2^m)\xrightarrow{\quad\overline{\rho}_i\quad} A
  \end{equation*}
  by {\it projectivity}, as in the proof of \cite[Proposition 5.4]{cpt_dim}, via \cite[Proposition 2.10]{Semiprojective}. Define $a := \sum\limits_{i=0}^n\overline{\rho}_i(t \otimes 1)$. Note that $\pi_x(a)$ is invertible, so we may choose an element $b \in A$ such that $\pi_x(b) = (\pi_x(a))^{-1}$. That is, both $\|ab - 1\|_x = 0$ and $\|ba - 1\|_x = 0$. By \cite[Proposition 1.2]{rief_flds}, for each $c \in A$ the mapping 
\begin{equation*} X \ni x \mapsto \|c\|_x
\end{equation*}
is upper semicontinuous, so there exists a neighborhood $U$ of $x$ such that for each $y \in U$, both $\|ab - 1\|_y < 1$ and $\|ba - 1\|_y < 1$. In particular, $\pi_y(a)$ is invertible for all $y \in U$. Since $\sum\limits_{i=0}^n\overline{\rho}_i(t \otimes 1)$ is invertible modulo $J_y$ for all $y \in U$, this shows that
\begin{equation*}
    \wtrivdim{A_{y}}{\bZ/2^m}\le n,\ \forall y \in U,
  \end{equation*}
  finishing the proof.
\end{proof}


The following examples show that the weak local-triviality dimension is not necessarily continuous, even if a given field of $C^*$-algebras is continuous in the sense of \Cref{def:contfield}.

\begin{example}\label{ex:ws3}
  Consider the antipodal $\bZ/2$-action on the $\theta$-deformed spheres $C(\mathbb{S}^3_\theta)$, which form a continuous field. The Borsuk-Ulam theorem implies that $\wtrivdim{\thetasphere{3}{}}{\bZ/2} = 3$. On the other hand, for $\theta\not\in\bZ$ the proof of \cite[Theorem 2.10]{bentheta} shows that if $\thetasphere{3}{\theta}$ is noncommutative, with $z_j = x_j + i y_j$, then
\begin{equation*}
  (x_1 + x_2)^2 + (y_1 + y_2)^2 = 1 + w
\end{equation*}
where $\|w\| < 1$. Hence, we have found two self-adjoint odd elements whose sum-square is invertible, and $\wtrivdim{\thetasphere{3}{\theta}}{\bZ/2} \leq 1$. The dimension is not zero, as there exists an equivariant quotient $\thetasphere{3}{\theta} \to \thetasphere{1}{}$, so $\wtrivdim{\thetasphere{3}{\theta}}{\bZ/2} = 1$. 

We conclude that the weak local-triviality dimension function is upper semicontinuous, but not continuous, in the parameter $\theta$.
\end{example}

\begin{example}\label{ex:defquant}
  Consider the {\it Berezin quantization} of the coadjoint orbit
  \begin{equation*}
    SU(2) / \bS^1
  \end{equation*}
  where $\bS^1$ is a maximal torus: see \cite[Theorems 1 and 2]{lands_sq} and \cite[Theorem III.1.11.1]{lands_topic}, for $G=SU(2)$, as recalled in \cite[Introduction]{rief_fuzz-conv-1}.
  
  The field $A$ of $C^*$-algebras (and actions) we consider is based on
  \begin{equation*}
    X := \bZ_{\ge 1}\sqcup\{\infty\}, \quad n\to\infty,
  \end{equation*}
  with
  \begin{itemize}
  \item the fiber $A_n$ at $n\in \bZ_{\ge 1}$ isomorphic to the matrix algebra $M_n$;
  \item the fiber $A_{\infty}$ isomorphic to $C(\bS^2)$;
  \item the action at $\infty$ given by translation of $SU(2)$ on $\bS^2\cong SU(2)/\bS^1$ or, equivalently, the usual action of $SU(2)$ by rotations, via the surjection
    \begin{equation}\label{eq:su2so3}
      SU(2)\xrightarrow{\quad}PSU(2)\cong SO(3);
    \end{equation}
  \item the action of $SU(2)$ on $M_n$, $n\in\bZ_{\ge 1}$ given by the conjugation action on
    \begin{equation*}
      M_n\cong B(\bC^n)
    \end{equation*}
    resulting from realizing $\bC^n$ as the irreducible $n$-dimensional representation of $SU(2)$ (see  \cite[Theorem VIII.4.2]{simon_fincpct}).    
  \end{itemize}  

  The compact group $SU(2)$ acts on everything in sight (i.e. all $A_x$, $x\in X$) via its quotient \Cref{eq:su2so3}. One can then restrict said actions to subgroups of $PSU(2)$. Consider, specifically, the subgroup $\bZ/2$ generated by the image in $PSU(2)$ of
  $\sigma:=
  \begin{pmatrix}
    0&-1\\
    1&\phantom{-}0
  \end{pmatrix}$.
  Because every rotation has fixed points on the sphere, no $\bZ/2$-action on $\bS^2$ obtained in this manner can be free; this means in particular that $\wtrivdim{A_{\infty}}{\bZ/2}=\infty$.
  
  On the other hand, $\sigma$ acts on $\bC^n$ with eigenvalues $\pm\lambda$, with $\lambda=1$ or $\lambda = i$ depending on the parity of $n$, and with the $(\pm\lambda)$-eigenspaces of dimension as close to equal as possible. It follows that $\sigma$ induces the usual gauge $(\bZ/2)$-action on $M_n$ resulting from the latter's realization as the graph algebra of a length-$n$ directed path. Finally, \cite[Proposition 4.5]{cpt_dim} shows that
  \begin{equation*}
    n \text{ is even } \,\, \implies \,\, \wtrivdim{A_n}{\bZ/2}=0,
  \end{equation*}
  which demonstrates the claimed discontinuity at $\infty$.

  This example applies to all three variants of the local-triviality dimension, so none of them are continuous over fields.
\end{example}

Another pathology can occur upon dropping some of the constraints on the group $\bG$ of \Cref{th:semicont}: there exist continuous fields for which even the weak local-triviality dimension fails to be upper semicontinuous.

\begin{example}\label{ex:nctori}
  Consider the noncommutative 2-tori $A_{\theta}=A_{\theta}^2$, the universal $C^*$-algebras respectively generated by two unitaries $u := u_1$ and $v := u_2$ subject to
  \begin{equation*}
    vu = e(\theta)uv. 
  \end{equation*}
  The $A_\theta$ are the fibers of a continuous field by \cite[Theorem 8.13 and Example 10.2]{rief_def-rd}, and all admit {\it ergodic} actions (see \cite[p.537]{gvf_ncg}) by the ordinary 2-torus $\bT^2\cong (\bS^1)^2$. This constitutes a continuous field of actions in the sense of \cite[Definition 3.1]{rief_flds}. Note the following.
  \begin{itemize}
  \item At $\theta=0$, we have $A_{0}\cong C(\bT^2)$ with the usual translation action, so that all dimensions
    \begin{equation*}
      \wtrivdim{A_0}{\bT^2} = \trivdim{A_0}{\bT^2} = \strivdim{A_0}{\bT^2} = 0
    \end{equation*}
    vanish by \cite[Remark 2.5]{cpt_dim}.

  \item On the other hand, for non-integral values of $\theta$, none of the dimensions can vanish. The $\bT^2$-action is ergodic, so the eigenspaces for the characters of $\bT^2$ are all 1-dimensional (see \cite[Theorem 1]{wass_ergd-1}). Therefore an equivariant morphism
    \begin{equation*}
      C(\bT^2)\xrightarrow{\quad}A_{\theta}
    \end{equation*}
    would have to be onto, which would force $A_{\theta}$ to be commutative.
  \end{itemize}
  Because all of the local-triviality dimensions vanish at $\theta=0$ but are {\it larger} everywhere else in a neighborhood of that point, the upper semicontinuity of \Cref{th:semicont} fails in this case.
\end{example}

Incidentally, in the context of \Cref{ex:nctori} we can say much more: not only are the LT dimensions strictly positive for non-integral $\theta$, they are infinite.

\begin{theorem}\label{th:ergdim0}
  Let $\bG$ be a compact quantum group acting ergodically on a unital $C^*$-algebra $A$. Consider the following conditions.
  \begin{enumerate}[(a)]
  \item\label{item:th:ergdim0:dims0} All of the LT dimensions vanish.
  \item\label{item:th:ergdim0:dimsfin} All of the LT dimensions are finite.
  \item\label{item:gradedwtrivfin} $\wtrivdim{A}{\bG}$ is finite.

  \item\label{item:th:ergdim0:factor} There is a factorization
    \begin{equation*}
      C(\bG)\xrightarrowdbl{\quad}A\xrightarrowdbl{\quad}C_r(\bG),
    \end{equation*}
    where the left-hand surjection induces the $\bG$-action on $A$. 
    

  \end{enumerate}
  We then have
  \begin{equation}\label{eq:genimpl}
    \text{\Cref{item:th:ergdim0:dims0}}
    \xLeftrightarrow{\quad}
    \text{\Cref{item:th:ergdim0:dimsfin}}
    \xLeftrightarrow{\quad}
    \text{\Cref{item:gradedwtrivfin}}
    \xLeftarrow{\quad}
    \text{\Cref{item:th:ergdim0:factor}},
  \end{equation}
  with full equivalence when $\bG$ is {\it of Kac type} \emph{\cite[post Proposition 1.7.9]{nt_cqg}}. 
\end{theorem}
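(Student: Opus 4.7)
The chain \Cref{item:th:ergdim0:dims0}$\,\Rightarrow\,$\Cref{item:th:ergdim0:dimsfin}$\,\Rightarrow\,$\Cref{item:gradedwtrivfin} is immediate from the progressive weakening across \Cref{def:LT,def:WLT,def:SLT}, and \Cref{item:th:ergdim0:factor}$\,\Rightarrow\,$\Cref{item:th:ergdim0:dims0} is also immediate: the equivariant unital surjection $C(\bG)\twoheadrightarrow A$ hypothesized in \Cref{item:th:ergdim0:factor} is in particular an equivariant unital $*$-homomorphism $C(\bG)\to A$, which witnesses $\strivdim{A}{\bG}=0$ via \Cref{def:SLT} (since $E_0^\Delta\bG=\bG$).

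For the substantive implication \Cref{item:gradedwtrivfin}$\,\Rightarrow\,$\Cref{item:th:ergdim0:dims0}, my plan is to extract a unital equivariant $*$-homomorphism $\pi:C(\bG)\to A$ from the witnesses $\rho_j:C_0((0,1])\otimes C(\bG)\to A$ of $\wtrivdim{A}{\bG}\le n<\infty$. Because the coaction $\id\otimes \Delta$ fixes $t\otimes 1$, equivariance places each $\rho_j(t\otimes 1)$ in the fixed-point algebra $A^{\bG}=\bC\cdot 1_A$ by ergodicity, so each equals a scalar $c_j\ge 0$; invertibility of $\sum_j c_j\cdot 1_A$ forces $c_{j_0}>0$ for some $j_0$. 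Any positive contractive approximate unit $(f_k(t))_k\subseteq C_0((0,1])$ with $f_k\to 1$ pointwise then yields $\rho_{j_0}(f_k(t)\otimes 1)=f_k(c_{j_0})\cdot 1_A\to 1_A$, so $\rho_{j_0}$ is non-degenerate. It thus extends uniquely to a unital equivariant morphism of multiplier algebras $\tilde\rho_{j_0}:M(C_0((0,1])\otimes C(\bG))\to M(A)=A$, and $\pi(h):=\tilde\rho_{j_0}(1\otimes h)$ produces the desired unital equivariant $\pi:C(\bG)\to A$. This $\pi$ witnesses $\strivdim{A}{\bG}=0$, so all three LT dimensions vanish.

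For \Cref{item:th:ergdim0:dims0}$\,\Rightarrow\,$\Cref{item:th:ergdim0:factor} in the Kac case, my plan is to start with $\sigma:C(\bG)\to A$ unital equivariant (supplied by $\strivdim{A}{\bG}=0$) and upgrade it to the two desired surjections, using the Haar state $h$ on $C(\bG)$ and the unique invariant state $\omega$ on $A$ (unique by ergodicity). Invariance of $\omega\circ\sigma$ forces $\omega\circ\sigma=h$. Since $h$ is faithful on the dense Hopf $*$-subalgebra $\cO(\bG)\subseteq C(\bG)$ (it is faithful on $C_r(\bG)$, into which $\cO(\bG)$ embeds), the identity $\omega(\sigma(a)^*\sigma(a))=h(a^*a)$ forces $\sigma$ to be injective on each spectral subspace $C(\bG)_\pi$, which has dimension $(\dim\pi)^2$. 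The Kac hypothesis supplies the standard multiplicity bound $\dim A_\pi\le(\dim\pi)^2$ for ergodic actions. Equivariance places $\sigma(C(\bG)_\pi)\subseteq A_\pi$, and the matching dimensions collapse the inclusion, giving $\sigma(C(\bG)_\pi)=A_\pi$ for every irreducible $\pi$; summing yields that $\sigma$ surjects onto the dense Peter--Weyl subalgebra $\cO(A):=\bigoplus_\pi A_\pi$, whence closedness of $\sigma(C(\bG))$ gives surjectivity onto $A$. The same identity yields $\ker\sigma\subseteq N_h$, so $\sigma$ descends to a $\bG$-equivariant surjection $A\cong C(\bG)/\ker\sigma\twoheadrightarrow C(\bG)/N_h=C_r(\bG)$, completing the factorization.

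The main obstacle is this Kac-case implication: the crucial ingredient is the multiplicity bound $\dim A_\pi\le(\dim\pi)^2$, which fails outside the Kac setting (where quantum dimensions intervene) and is precisely what lets the dimension counting collapse to force $\sigma$ to be surjective.
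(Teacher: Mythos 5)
Your proof is correct and follows essentially the same route as the paper's: ergodicity forces $\rho_{j_0}(t\otimes 1)$ to be a positive scalar, which yields a unital equivariant morphism $C(\bG)\to A$ (the paper packages this step via Winter--Zacharias order-zero maps rather than your approximate-unit/multiplier-extension argument, but the two are interchangeable), and the Kac-case implication combines faithfulness of the Haar state on $\cO(\bG)$ with the multiplicity bound for ergodic actions exactly as you do. The only nitpick is that $C_r(\bG)$ is the quotient of $C(\bG)$ by the kernel of the GNS \emph{representation} of $h$, not by the left ideal $N_h$ itself; your argument survives, since $\ker\sigma$ is a two-sided ideal and $h(c^*a^*ac)=\omega(\sigma(ac)^*\sigma(ac))=0$ for $a\in\ker\sigma$ shows $\ker\sigma\subseteq\ker\lambda_h$.
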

\begin{proof}
  Among \Cref{eq:genimpl}, the only nontrivial implication is \Cref{item:gradedwtrivfin} $\Longrightarrow$ \Cref{item:th:ergdim0:dims0}. Consider morphisms
  \begin{equation}\label{eq:phii}
    C_0((0, 1])\otimes C(\bG)
    \xrightarrow{\quad\varphi_i\quad}
    A,\quad
    0\le i\le n
  \end{equation}
  witnessing the finiteness of the weak LT dimension, as in \Cref{def:WLT}. In particular, $\sum\limits_{j=0}^n \varphi_i(t \otimes 1)$ is invertible, so we can fix some $\varphi:=\varphi_{i_0}$ with $\varphi(t\otimes 1)\ne 0$.

  Recall from \cite[Corollary 4.1]{WZcpoz} that $\varphi$ uniquely retrieves a completely positive contractive {\it order-zero} \cite[Definition 2.3]{WZcpoz} map
  \begin{equation*}
    C(\bG)\ni x
    \xmapsto{\quad\widetilde{\varphi}\quad}
    \varphi(t\otimes x)
    \in
    A.
  \end{equation*}
  Note that $\widetilde{\varphi}$ is $\bG$-equivariant in the usual sense, and by \cite[Theorem 3.3]{WZcpoz} it is a scaling of a morphism $C(\bG)\to A$ by the $\bG$-invariant, positive, non-zero element $h:=\widetilde{\varphi}(1)\in A$. Since ergodicity of the action implies that $h$ is a scalar, we can divide by it to recover a $\bG$-equivariant $C^*$-morphism $C(\bG)\to A$.

  Observe also that a $\bG$-equivariant $C^*$-morphism is by necessity one-to-one on the dense Hopf $*$-algebra \cite[\S 1.6]{nt_cqg} $\cO(\bG)\le C(\bG)$: the $\bG$-invariant state
  \begin{equation*}
    \begin{tikzpicture}[auto,baseline=(current  bounding  box.center)]
      \path[anchor=base] 
      (0,0) node (l) {$A$}
      +(2,.5) node (u) {$A\otimes C(\bG)$}
      +(4,0) node (r) {$\bC$}
      ;
      \draw[->] (l) to[bend left=6] node[pos=.5,auto] {$\scriptstyle \text{coaction}$} (u);
      \draw[->] (u) to[bend left=6] node[pos=.5,auto] {$\scriptstyle \psi\otimes h_{\bG}$} (r);
      \draw[->] (l) to[bend right=6] node[pos=.5,auto,swap] {$\scriptstyle $} (r);
    \end{tikzpicture}
    \quad
    \psi:=\text{state on }A
    ,\quad
    h_{\bG}:=\text{the Haar state on $\bG$}
  \end{equation*}
  restricts to $h_{\bG}$ along the assumed morphism $C(\bG)\to A$, and $h_{\bG}$ is faithful on $\cO(\bG)$ \cite[Corollary 1.7.5]{nt_cqg}. If the action is 0-dimensional, then, the multiplicity of a $\bG$-representation $\rho$ in $A$ is precisely $\dim\rho$. For ergodic actions of Kac groups this is also an upper bound for that multiplicity \cite[Theorem 17]{boca_erg} (see also \cite[Theorem 1]{wass_ergd-1} for classical compact groups), hence the implication \Cref{item:th:ergdim0:dims0} $\Longrightarrow$ \Cref{item:th:ergdim0:factor} for Kac-type $\bG$. 
\end{proof}

In particular, taking $\bG:=\bT^n$ for $n\ge 2$, the field
\begin{equation*}
  \theta\xmapsto{\quad} C(\bT^n_{\theta})
\end{equation*}
of ergodic $\bT^n$-actions in \Cref{ex:nctori} defined over the skew-symmetric matrices $\theta$ has local-triviality dimension equal to $0$ precisely when $\theta$ is integral. Moreover, it has local-triviality dimension $\infty$ elsewhere. This proves a stronger claim than used in \Cref{ex:nctori}.

\Cref{th:ergdim0} will also allow us to revisit the matter of how the {\it freeness} of an action relates to finiteness of the local-triviality dimensions. Recall the following:
\begin{itemize}
\item if the weak local-triviality dimension (or rather, any one of the local-triviality dimensions) of an action is finite, then \cite[Theorem 3.8]{hajacindex_xv2} shows that the action is free;
  
\item for the gauge $\bZ/k$-actions on finite-acyclic-graph algebras, \cite[Corollary 4.2]{cpt_dim} shows that freeness is {\it equivalent} to finiteness of the weak local-triviality dimension;

\item however, freeness of an action of $\bZ/k$ on a matrix algebra $M_n$ does {\it not} imply the finiteness of $\trivdim{M_n}{\bZ/k}$ or $\strivdim{M_n}{\bZ/k}$, as seen in \cite[Proposition 4.5 and Remark 4.6]{cpt_dim}.
\end{itemize}

\Cref{th:ergdim0} allows us to demonstrate another key fact: freeness of an action does not even imply finiteness of the {\it weak} local-triviality dimension.

\begin{example}\label{ex:nontrivcocycle}
  The group $\bG:=(\bZ/q)^2$ acts ergodically on $M_q$, with the two generators operating as conjugations by
  \begin{equation*}
    \begin{pmatrix}
      1&&&&\\
      &\zeta&&&\\
       &&\ddots&&\\
       & &&\zeta^{q-2}&\\
      &&&&\zeta^{q-1}
    \end{pmatrix}
    \quad\text{and}\quad
    \begin{pmatrix}
      0&&&&1\\
      1&0&&&\\
       &1&\ddots&&\\
       & &\ddots&0&\\
      0&&&1&0
    \end{pmatrix}
  \end{equation*}
  respectively for some primitive $q^{th}$ root of unity $\zeta$ (see \cite[proof of Proposition 12.2, (12.7)]{gvf_ncg}). This in fact realizes $M_q$ as a {\it cocycle twist} \cite[\S 2.2]{el_morita-def} $C^*(\Gamma,\sigma)$ of the group algebra of
  \begin{equation*}
    \Gamma:=\widehat{\bG}\cong (\bZ/q)^2,
  \end{equation*}
  for a cocycle whose class generates
  \begin{equation*}
    H^2\left((\bZ/q)^2,\ \bC^{\times}\right)\cong \bZ/q
  \end{equation*}
by \cite[Theorem 9.74 and Theorem following Example 9.65]{rot_homolog}. Next, \cite[Theorem 8.1.7]{montg_halg} once more shows that the induced grading by the Pontryagin-dual group $\Gamma$ is  strong:
\begin{equation*}
  M_{q,\gamma} M_{q,\gamma^{-1}} = M_{q,1}=\bC,
  \quad\forall \gamma\in \Gamma,
\end{equation*}
since every homogeneous component $M_{q,\gamma}$ contains unitaries. Therefore, the action is free. However, \Cref{th:ergdim0} shows that the weak local-triviality dimension is infinite. 
\end{example}

\section{Rational tori and spheres}\label{se:rat}

In this section, we consider the local-triviality dimensions of $\theta$-spheres, with an emphasis on the rational case. The $\theta$-spheres and quantum tori are related by a polar decomposition that mirrors that of the commutative case. Consider, for example, a sphere $\bS^{2n-1}$ whose complex coordinates $z_j$ are written in polar form
\begin{equation*}
  z_j = t_j u_j, \,\,\,\,\, t_j \in [0, 1], u_j \in \bS^1.
\end{equation*}
For $f \in C(\bS^{2n-1})$, if choices of $t_1, \ldots, t_n$ are fixed, then we recover a function $g \in C(\bT^n)$ by
\begin{equation*}
  g(u_1, \ldots, u_n) := f(t_1u_1, \ldots, t_n u_n).
\end{equation*}
However, if some particular $t_j$ equals $0$, then $z_j = t_j u_j = 0$ is never changing, so the function $g$ must not depend on $u_j$. That is, $g$ is in the $C^*$-algebra generated by the other coordinate functions. This perspective extends to the $\theta$-spheres and quantum tori, as in \cite[Theorem 2.5]{no_sph}. We recall this result and introduce some notation below.

\begin{notation}\label{not:subsplx}
  Let $n\ge 2$ and $[n]:=\{1,\ \ldots,\ n\}$.
  \begin{enumerate}[(1)]

  \item Following \cite[\S 2]{no_sph}, write
    \begin{equation*}
      \bS^{n-1}_+:=\{(t_1,\ \ldots,\ t_{n})\in \bS^{n-1}\ |\ t_i\ge 0\} 
    \end{equation*}
    for the ``first-quadrant'' portion of the $(n-1)$-sphere.

  \item For any subset $F\subseteq [n]$ set
    \begin{equation*}
      \bS^{n-1}_{++,F}:=\left\{(t_i)\in \bS^{n-1}_+\ |\ t_i>0\text{ exactly when }i\in F\right\}.
    \end{equation*}

    Each $\bS^{n-1}_{++,F}$ is the interior of an $(|F|-1)$-dimensional simplex, and the closed simplex $\bS^{n-1}_+$ is the disjoint union of the various open simplices $\bS^{n-1}_{++,F}$. 

  \item Set
    \begin{equation*}
      \begin{aligned}
        \bS^{n-1}_{+,F} \,\, &:= \,\, \overline{\bS^{n-1}_{++,F}}\\
                       &= \,\, \left\{(t_i)\in \bS^{n-1}_+\ | \,\, \forall i\not\in F, \ t_i=0 \, \right\}.
      \end{aligned}      
    \end{equation*}
    
  \end{enumerate}

For a skew-symmetric matrix $\theta\in M_n(\bR)$, recall from \cite[Theorem 2.5]{no_sph} that
\begin{equation}\label{eq:contpart}
  C(\bS_{\theta}^{2n-1})
  \, \cong \,
  \mathrm{Cont}_{\partial}\left(\bS_+^{n-1}\xrightarrow{\quad}C(\bT^n_{\theta})\right),
\end{equation}
where the $\partial$ subscript indicates satisfaction of the following boundary condition: whenever some $t_i$ vanish, the continuous function
\begin{equation*}
  \bS_+^{n-1}\xrightarrow{\quad f\quad}C(\bT^n_{\theta})
\end{equation*}
is to take values in the $C^*$-subalgebra of $C(\bT^n_{\theta})$ generated by the unitaries $u_j$, $j\ne i$. That is, for all $F \subseteq [n]$, 
\begin{equation}\label{eq:polar}
  f\left(\bS^{n-1}_{+,F}\right)
  \subseteq
  C^*(\{u_j\ |\ j \in F\}).
\end{equation}
\end{notation}

In particular, morphisms into a $\theta$-sphere relate to (continuous) families of morphisms into a quantum torus, which will allow us to study their dimensions more carefully.

\begin{lemma}\label{le:wpdimthetasphle}
Every $\theta$-sphere has $\wtrivdim{\thetasphere{k}{\theta}}{\bZ/2}
    \le
    \trivdim{\thetasphere{k}{\theta}}{\bZ/2}
    \le k$.
\end{lemma}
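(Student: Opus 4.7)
The inequality $\wtrivdim{\thetasphere{k}{\theta}}{\bZ/2}\le \trivdim{\thetasphere{k}{\theta}}{\bZ/2}$ is formal from the definitions: any witnessing family for the plain dimension automatically witnesses the weak dimension, since the condition that $\sum_i \rho_i(t\otimes 1)=1$ is stronger than invertibility (equivalently, via \Cref{pr:alt_dim_Z/k}, insisting on $\sum a_i^2 = 1$ is stronger than insisting on invertibility of this sum). So the only substantive content is the second inequality $\trivdim{\thetasphere{k}{\theta}}{\bZ/2}\le k$.

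The idea is to produce the required self-adjoint odd elements directly from the defining generators $z_1,\ldots,z_n$ via real/imaginary parts, exactly as one would do for the commutative sphere. Treat the two parities of $k$ separately.

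For $k=2n-1$, write $z_j = x_j + i y_j$ with $x_j := \tfrac{1}{2}(z_j+z_j^*)$ and $y_j := \tfrac{1}{2i}(z_j-z_j^*)$. Each $x_j$ and $y_j$ is self-adjoint, and because $R(z_j)=-z_j$ under the antipodal action, each is odd. The normality relation $z_jz_j^* = z_j^*z_j$ in \Cref{eq:sph} gives $x_j^2+y_j^2 = z_jz_j^*$, and summing the sphere relation $\sum_j z_jz_j^* = 1$ then yields
\begin{equation*}
x_1^2 + y_1^2 + \cdots + x_n^2 + y_n^2 \;=\; 1.
\end{equation*}
This is a list of $2n = k+1$ self-adjoint odd elements with sum of squares equal to $1$, so $\trivdim{\thetasphere{2n-1}{\theta}}{\bZ/2}\le 2n-1 = k$ by \Cref{pr:alt_dim_Z/k}.

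For $k=2n-2$, the additional relation $z_n = z_n^*$ collapses $z_n$ to its self-adjoint part $x_n$ (with $y_n = 0$), so the same decomposition now yields only $2n-1 = k+1$ nonzero summands:
\begin{equation*}
x_1^2 + y_1^2 + \cdots + x_{n-1}^2 + y_{n-1}^2 + x_n^2 \;=\; 1,
\end{equation*}
again all self-adjoint and odd, giving $\trivdim{\thetasphere{2n-2}{\theta}}{\bZ/2}\le 2n-2 = k$.

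There is no real obstacle here: once the definition is unwrapped via \Cref{pr:alt_dim_Z/k}, the classical formula that writes $1$ as a sum of squares of coordinates on the sphere already works verbatim in the $\theta$-deformed setting, because the normality and sphere relations in \Cref{eq:sph} are exactly what is needed and the scaled commutation $z_kz_j = e(\theta_{jk})z_jz_k$ is never invoked.
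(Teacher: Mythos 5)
Your argument is correct and is essentially identical to the paper's own proof: both decompose each generator $z_j = x_j + iy_j$ into self-adjoint odd parts, use normality to write $z_jz_j^* = x_j^2 + y_j^2$, and sum the sphere relation, with one fewer term when the last generator is self-adjoint. The only addition is your explicit (and correct) remark that the weak dimension is formally bounded by the plain one, which the paper leaves implicit.
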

\begin{proof}
For $k = 2n-1$, each generator $z_j = x_j + i y_j$ of $\thetasphere{2n-1}{\theta}$ is normal and odd, so the identity $1 = \sum\limits_{j=1}^n z_jz_j^* = \sum\limits_{j=1}^n x_j^2 + y_j^2$ shows that the weak and plain local-triviality dimensions are at most $2n-1$. A similar argument applies to $k = 2n-2$, with the final generator $z_n = x_n$ self-adjoint.
\end{proof}

\begin{theorem}\label{th:strongtheta}
 Every $\theta$-sphere has $\strivdim{\thetasphere{k}{\theta}}{\mathbb{Z}/2} \geq k$. If each entry of $\theta$ is a rational number with odd denominator, then in fact $\strivdim{\thetasphere{k}{\theta}}{\mathbb{Z}/2} = k$.
\end{theorem}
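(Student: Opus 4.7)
The plan is to prove the two bounds by different methods: noncommutative Borsuk-Ulam machinery for the general lower bound, and an explicit central construction exploiting the odd-denominator hypothesis for the upper bound.

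For the lower bound $\strivdim{\thetasphere{k}{\theta}}{\bZ/2}\ge k$ (valid for every antisymmetric $\theta$), I would argue by contradiction. Assuming the bound fails, \Cref{pr:alt_dim_Z/k} supplies mutually commuting self-adjoint odd elements $a_0,\dots,a_n\in\thetasphere{k}{\theta}$ with $n<k$ and $\sum_{i=0}^n a_i^2=1$. Joint functional calculus then produces a $\bZ/2$-equivariant unital $*$-homomorphism $\phi\colon C(\bS^n)\to\thetasphere{k}{\theta}$ sending the $i^{\text{th}}$ coordinate of $\bS^n$ to $a_i$, with equivariance automatic because each $a_i$ is odd. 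The task reduces to ruling out such a $\phi$ when $n<k$, a noncommutative Borsuk-Ulam statement for Natsume-Olsen spheres. I would close the argument via an extension of \cite[Corollary 4.12]{bentheta} permitting the source to be a commutative sphere of strictly lower dimension, with the obstruction carried by the $K_1$-class of the Natsume-Olsen generator $Z_\theta$ of \cite[\S 5]{no_sph} when $k$ is odd, and a parallel $K_0$-obstruction when $k$ is even. The main hurdle here is precisely this extension beyond the same-dimensional setting in which \cite[Corollary 4.12]{bentheta} is stated.

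For the upper bound, pick an odd common denominator $q$ for the entries of $\theta$, so that $q\theta_{jk}\in\bZ$ for all $j,k$. Then $z_\ell z_j^q=e(q\theta_{j\ell})z_j^q z_\ell=z_j^q z_\ell$, placing each $z_j^q$ in the center of $\thetasphere{k}{\theta}$; and oddness of $q$ forces $z_j^q\mapsto-z_j^q$ under the antipodal action. Independently, the positive elements $p_j:=z_jz_j^*$ are always central in $\thetasphere{k}{\theta}$ (a direct computation using the twisted commutation and its adjoint gives $z_\ell p_j=p_j z_\ell$), so the $p_j$ mutually commute, with $0\le p_j\le 1$ and $\sum_{j=1}^n p_j=1$. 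Consequently the central, positive, antipodal-invariant element
\begin{equation*}
h\,:=\,\sum_{j=1}^n z_j^q(z_j^q)^*\,=\,\sum_{j=1}^n p_j^q
\end{equation*}
is bounded below by $n^{1-q}\cdot 1$ via the power-mean inequality applied pointwise on the character space of the center, hence invertible, admitting a central positive square root.

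The elements $w_j:=z_j^q h^{-1/2}$ are then central, normal, odd, and satisfy $\sum_j w_jw_j^*=h\cdot h^{-1}=1$. Splitting $w_j=a_j+ib_j$ yields commuting central odd self-adjoints $a_j,b_j$ with $w_j^*w_j=a_j^2+b_j^2$, the cross-term $[a_j,b_j]$ vanishing by centrality. In the odd-dimensional case $k=2n-1$ this gives $2n=k+1$ commuting odd self-adjoints whose squares sum to $1$; in the even-dimensional case $k=2n-2$, the generator $z_n$ is itself self-adjoint, so $w_n$ is self-adjoint and the list has length $2n-1=k+1$ instead. Either way, \Cref{pr:alt_dim_Z/k} yields $\strivdim{\thetasphere{k}{\theta}}{\bZ/2}\le k$, matching the lower bound.
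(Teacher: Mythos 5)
Your argument is correct overall, and splits naturally into a part that coincides with the paper and a part that does not. For the lower bound you follow the paper's route exactly: reduce $\strivdim{\thetasphere{k}{\theta}}{\bZ/2}\ge k$ to the nonexistence of a $\bZ/2$-equivariant unital morphism $C(\bS^m)\to\thetasphere{k}{\theta}$ for $m<k$. The ``main hurdle'' you flag --- extending the equal-dimension degree statement of \cite[Corollary 4.12]{bentheta} to targets of strictly larger dimension --- is not actually a hurdle: \cite[Corollary 3.8]{bentheta} already asserts that no $\bZ/2$-equivariant morphism $\thetasphere{m}{\phi}\to\thetasphere{k}{\theta}$ exists when $m<k$, and the paper simply cites that result, specializing to commutative domain spheres. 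So no new $K$-theoretic work is required; you only needed the right reference.

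For the upper bound your construction is genuinely different from the paper's. The paper works through the polar decomposition \Cref{eq:contpart} and takes the hybrid sections $f_i(t)=t_iu_i^M$, keeping the radial part to the first power so that $\sum f_if_i^*=\sum t_i^2=1$ holds on the nose with no normalization. You instead take the honest powers $z_j^q$ inside the sphere algebra, note that $h=\sum_j z_j^q(z_j^q)^*=\sum_j(z_jz_j^*)^q$ is central and bounded below by $n^{1-q}$ (convexity of $x\mapsto x^q$ applied on the spectrum of the commutative algebra generated by the central elements $z_jz_j^*$, which sum to $1$), and normalize by $h^{-1/2}$. Both routes produce $k+1$ commuting odd self-adjoints with square-sum $1$; yours avoids the functional realization of the sphere entirely, at the cost of the normalization step, while the paper's choice makes the square-sum identity immediate. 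Two small points worth making explicit in your version: centrality of $z_j^q$ (and of $z_jz_j^*$) uses the adjoint-twisted relation $z_k^*z_j=e(-\theta_{jk})z_jz_k^*$, which belongs to the Natsume--Olsen presentation even though \Cref{eq:sph} only records the unstarred commutation; and in the even case one should recall that $z_n$ is central and self-adjoint by definition of $\thetasphere{2n-2}{\theta}$, so that $w_n$ is indeed self-adjoint and the count drops by one as you claim.
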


\begin{proof}
  The noncommutative Borsuk-Ulam theorem \cite[Corollary 3.8]{bentheta} implies that if $m < k$, there is no $\mathbb{Z}/2$-equivariant morphism $\thetasphere{m}{\phi} \to \thetasphere{k}{\theta}$. This applies in particular to the case when the domain sphere is commutative. Nonexistence of an equivariant morphism $\thetasphere{m}{} \to \thetasphere{k}{\theta}$ for $m < k$ is equivalent to the claim $\strivdim{\thetasphere{k}{\theta}}{\mathbb{Z}/2} \geq k$.
  
  Now, assume that each entry of $\theta$ is a rational number with odd denominator, and suppose $k = 2n - 1$ is odd. There exists an odd positive integer $M$ such that in the associated quantum torus $C(\mathbb{T}^n_\theta)$, the elements $u_1^M, \ldots, u_n^M$ are central. The functions $f_i: \bS^{n-1}_+ \to C(\mathbb{T}^n_\theta)$ defined by
\begin{equation*} f_i(t_1, \ldots, t_n) = t_i u_i^M \end{equation*}
meet the boundary conditions \Cref{eq:polar} of $\thetasphere{2n-1}{\theta}$ and represent odd, normal, commuting elements with $f_1f_1^* + \ldots + f_n f_n^* = 1$. This splits into the sum-square of $2n$  self-adjoint, odd, commuting elements, so $\strivdim{\thetasphere{2n-1}{\theta}}{\mathbb{Z}/2} \leq 2n-1$. 

If $k$ is even, then the final generator is self-adjoint, so a similar argument applies with one fewer term.
\end{proof}

\setstretch{1.06} 

To say more about the strong dimension of $\theta$-spheres, we will discuss quantum tori and the polar decomposition more carefully. Consider a noncommutative 2-torus $A_\theta = C(\bT^2_\theta)$, generated by $u:=u_1$ and $v:=u_2$, for a {\it rational} parameter
\begin{equation*}
  \theta=\frac pq,\quad p\in \bZ,\ q\in \bZ^+
\end{equation*} 
written in lowest terms. From \cite[Theorem 3.12]{rief_canc} (or \cite[Proposition 12.2]{gvf_ncg}) we see that, in this case, there is a rank-$q$ vector bundle $\cE$ on $\bT^2$ such that
\begin{equation}\label{eq:athetae}
  A_{\theta} \,\,
  \cong \,\,
  \End(\cE)
  \,\, = \,\,
  \Gamma(\cE\otimes \cE^*)
  \,\, := \,\,
  \text{continuous sections of the bundle }\cE\otimes \cE^*.
\end{equation}
This is also recalled in \cite[\S 2.5]{khal_lecncg}, which is perhaps more accessible. The center
\begin{equation*}
  Z:=Z(A_{\theta}) \cong C(\bT^2)
\end{equation*}
is generated by the two unitaries $u^q$ and $v^q$ \cite[Corollary 12.3]{gvf_ncg}, and $A_{\theta}$ is a bundle of $q\times q$ matrix algebras over that center. This also makes $A_{\theta}$ into a {\it rank-$q^2$ Azumaya algebra} (see \cite[\S\S 13.7.6 and 13.7.13]{mr}, \cite[\S\S III.1.3 and III.1.4]{bg_algqg}, \cite[Definition 5.4.17]{agpr_pi}, etc.): it is finitely-generated projective over its center $Z$, with
\begin{equation*}
  A_{\theta}\otimes_Z A_{\theta}^{op}\ni a\otimes b \, \xmapsto[\quad\cong\quad]{} \, a\cdot b \in \End_{Z}(A_{\theta}).
\end{equation*}
We next clarify a claim made in the literature about rational $A_\theta$.

\begin{remarks}\label{res:chern}
  \begin{enumerate}[(1),wide=0pt]

  \item\label{item:notflat} The discussion following \cite[Exercise 2.7]{khal_lecncg} claims that the rank-$q$ vector bundle $\cE$ on $\bT^2$ yielding \Cref{eq:athetae} is {\it flat}, which it cannot be. A flat bundle (see \cite[\S 1.2]{kob_cplx}) admits a connection with vanishing curvature, or local trivializations with locally constant transition functions, and flat bundles have vanishing Chern classes by \cite[Proposition II.3.1]{kob_cplx}. In contrast, the Chern class
    \begin{equation}\label{eq:c1}
      c_1(\cE)\in H^2(\bT^2,\bZ)\cong \bZ
    \end{equation}
    is identified in \cite[Proposition 3.8 and Theorem 3.9]{rief_canc} with an integer $a$ for which $ap+bq=1$ (the {\it twist} of \cite[p.299]{rief_canc} is $-a$). In particular, that Chern class cannot vanish. Note that the vector bundle $\cE$ is only unique up to the tensor product with a line bundle \cite[Théorème 9]{dd}, so $a$ is only unique modulo $q$. This is also compatible with the fact that $A_{\theta}$ only depends on the coset of $\theta$ modulo $\bZ$.
    
    Additionally, vector bundles over compact connected Riemann surfaces are completely determined by their ranks and degrees (i.e. Chern classes $c_1$) by \cite[Proposition, p.2]{thad_introtop}. Flat bundles on $\bT^2$, for that reason, are topologically trivial. They need not be holomorphically trivial, but `topologically' is what matters in the present context.
    

  \item\label{item:mnisflat} What does hold, and what was perhaps meant by the flatness claim in the paragraph preceding \cite[Exercise 2.8]{khal_lecncg}, is that the \emph{endomorphism} bundle $\cE\otimes \cE^*$ (recovering $A_{\theta}$ as the section algebra \Cref{eq:athetae}) is flat, and in fact is topologically trivial as a vector bundle (but {\it not} as a bundle of algebras!). The first Chern class $c_1(\cE\otimes \cE^*)\in H^2(\bT^2,\bZ)$ vanishes by \cite[(1.14)]{kob_cplx}, whence triviality by \cite[Proposition, p.2]{thad_introtop} again.
  \end{enumerate}  
\end{remarks}

Note that \Cref{eq:athetae} implies that for rational $\theta=\frac pq$ in lowest terms, the algebras $A_{\theta}:=C(\bT^2_{\theta})$ satisfy the {\it polynomial identities} \cite[Definition 2.2.1]{agpr_pi} of $M_q$ (and no smaller $q'<q$), and hence are {\it polynomial-identity (or PI) algebras} of {\it PI-degree $q$}, as in \cite[\S\S I.13.1 and I.13.3]{bg_algqg}. The polar decomposition \Cref{eq:contpart} of the $\theta$-spheres implies that the same claim applies to the rational sphere algebras. From \cite[Example I.13.2 3]{bg_algqg}, this is in fact a consequence of the claim that the rational $\theta$-spheres are finitely-generated modules over their respective centers.


\setstretch{1}

\begin{proposition}\label{pr:s3thetapi}
  Let $\theta=\frac pq$ be a lowest-terms rational number. Then the following claims hold.
  \begin{enumerate}[(1)]

  \item\label{item:fincent} The center of $\thetasphere{3}{\theta}$ is
    \begin{align*}
      Z_{\theta}:=Z\left(\thetasphere{3}{\theta}\right)
      &\cong
        \mathrm{Cont}_{\partial}\left(\bS^1_{+}\xrightarrow{\quad} Z(C(\bT^2_{\theta}))\right)\numberthis\label{eq:cents3theta}\\
      &=
        \left\{f: [0,1]\xrightarrow{\emph{cont.}}C^*(u^q,v^q)\cong C(\bT^2)\quad |\quad f(0)\in C^*(u^q),\ f(1)\in C^*(v^q)\right\}\\
      &\cong
        C(\bS^3).
    \end{align*}

  \item\label{item:ispi} $\thetasphere{3}{\theta}$ is a PI algebra of PI-degree $q$.

  \item\label{item:azloc} If $\theta \not\in \mathbb{Z}$, then the {\it Azumaya locus} \emph{\cite[\S III.1.7]{bg_algqg}} of $\thetasphere{3}{\theta}$ is the complement in $\bS^3$ of two disjoint circles
    \begin{equation*}
      \bS^1_i\subset \bS^3\cong\mathrm{Prim}(Z_{\theta}),\quad i=0,1
    \end{equation*}
    defined by evaluating the functions $f$ of \emph{\Cref{eq:cents3theta}} at $i=0,1$ respectively.

  \item\label{item:awayazloc} The fibers of the $C(\bS^3)$-algebra $\thetasphere{3}{\theta}$  at the points of the circles $\bS^1_i$ are abelian $q$-dimensional. 
    
  \end{enumerate}
\end{proposition}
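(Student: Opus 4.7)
The plan is to base everything on the polar presentation \Cref{eq:contpart}, viewing $\thetasphere{3}{\theta}$ as continuous functions $f:\bS^1_+\to C(\bT^2_\theta)$ with boundary constraints $f(1,0)\in C^*(u)$ and $f(0,1)\in C^*(v)$, where $u:=u_1$ and $v:=u_2$. The central subalgebra of $C(\bT^2_\theta)$ is $C^*(u^q,v^q)\cong C(\bT^2)$, and $C(\bT^2_\theta)$ is generated over it by the $q^2$ monomials $\{u^iv^j\ :\ 0\le i,j< q\}$.

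For \Cref{item:fincent}, I would first argue that $f\in \thetasphere{3}{\theta}$ is central if and only if $f(t)$ lies in the center of the respective fiber algebra at each $t\in\bS^1_+$, using that evaluation at any point is surjective onto the fiber (achieved by scaling with a bump function). Pointwise centrality then forces $f(t)\in C^*(u^q,v^q)$ everywhere, while intersecting with the boundary constraints yields $f(1,0)\in C^*(u)\cap C^*(u^q,v^q)=C^*(u^q)$ and symmetrically $f(0,1)\in C^*(v^q)$. These are exactly the conditions defining the coordinate algebra of the topological join $\bS^1*\bS^1\cong\bS^3$, so $Z_\theta\cong C(\bS^3)$. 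Finiteness of $\thetasphere{3}{\theta}$ over $Z_\theta$ then follows because the same $q^2$ monomials generate $\thetasphere{3}{\theta}$ over $Z_\theta$ pointwise, and hence (by a partition of unity on $\bS^1_+$) globally.

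For \Cref{item:ispi}, I would sandwich the PI-degree from both sides: the inclusion $\thetasphere{3}{\theta}\hookrightarrow C(\bS^1_+,C(\bT^2_\theta))$ forces the standard identity $s_{2q}$ to hold, while evaluation at any interior point of $\bS^1_+$ gives a surjection $\thetasphere{3}{\theta}\to C(\bT^2_\theta)$ onto the rank-$q^2$ Azumaya algebra $C(\bT^2_\theta)$, which has PI-degree exactly $q$.

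Finally, for \Cref{item:azloc} and \Cref{item:awayazloc}, I would compute each $C(\bS^3)$-fiber pointwise. At an interior $(t,\chi)\in\mathrm{int}(\bS^1_+)\times\bT^2$, the fiber is $C(\bT^2_\theta)/\fm_\chi\cdot C(\bT^2_\theta)\cong M_q$ by the Azumaya property. At a boundary point where $t_i=0$ with character $\chi\in\bS^1$ on $C^*(u_j^q)$ (for $j\neq i$), the fiber reduces to $C^*(u_j)/(u_j^q-\chi)\cong\bC^q$, the abelian $q$-dimensional algebra of a unitary with prescribed $q$-th power. By definition of the Azumaya locus as the set where the fiber equals $M_q$, it equals $\bS^3\setminus(\bS^1_0\sqcup\bS^1_1)$. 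The main obstacle I anticipate is identifying the kernel of the boundary-fiber evaluation with $\fm_x\cdot\thetasphere{3}{\theta}$, which requires a partition-of-unity argument respecting the polar boundary constraints; once this is established, the remaining parts are essentially bookkeeping.
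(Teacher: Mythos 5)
Your proposal is correct and takes essentially the same route as the paper: both compute the center pointwise from the polar decomposition \Cref{eq:contpart} together with $Z(C(\bT^2_\theta))=C^*(u^q,v^q)$, use the monomials $u^kv^\ell$ with $0\le k,\ell<q$ for finiteness over the center, deduce the PI claim from the torus case, and identify the fibers as $M_q$ over interior points and as the abelian $q$-dimensional algebra $C^*(u_j)/(u_j^q-\chi)$ over the two exceptional circles. The extra details you supply (the surjectivity-of-evaluation argument for centrality, the PI-degree sandwich, and the kernel identification you flag at the boundary) are standard points the paper leaves implicit, not departures from its argument.
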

\begin{proof}

 {\bf \Cref{item:fincent}}: That \Cref{eq:cents3theta} is indeed the center of $\thetasphere{3}{\theta}$ follows from the polar decomposition \Cref{eq:contpart}, realized in this case as
    \begin{equation}\label{eq:cthetaatheta}
      \thetasphere{3}{\theta}
      \cong
      \left\{f: [0,1]\xrightarrow{\textrm{cont.}} C(\bT^2_\theta) \quad |\quad f(0)\in C^*(u),\ f(1)\in C^*(v)\right\},
    \end{equation} 
and the fact that $u^q$ and $v^q$ generate the center of $C(\bT^2_\theta)$.

{\bf \Cref{item:ispi}}: This is immediate from \Cref{eq:cthetaatheta} and the already-noted analogous claim about $C(\bT^2_{\theta})$.

{\bf \Cref{item:azloc} and \Cref{item:awayazloc}}: Evaluating the $f$ of \Cref{eq:cthetaatheta} at any interior point $t\in (0,1)$ produces all of $C(\bT^2_{\theta})$, which in turn has a $q\times q$ matrix algebra as its fiber at any point of
    \begin{equation*}
      \bT^2\cong\mathrm{Prim}(Z(C(\bT^2_\theta)))
    \end{equation*}
    by \Cref{eq:athetae}. On the other hand, evaluating the $f$ of \Cref{eq:cthetaatheta} at either boundary point produces a function algebra $C(\bS^1)$ generated, respectively, by one of the unitaries $u,v\in C(\bT^2_\theta)$. Focusing on $u$ to fix the notation, further evaluation at a point $z$ of the corresponding circle $\bS^1$ means imposing the relation $u^q=z$. The $C^*$-algebra generated by a unitary $u$ subject to the relation $u^q=z\in \bS^1$ is indeed abelian and $q$-dimensional, and we are done.
\end{proof}

\begin{remark}\label{re:heeg}
  The identification given by \Cref{pr:s3thetapi} item \Cref{item:azloc} of the ``ill-behaved'' locus of
  \begin{equation*}
    \bS^3\cong\mathrm{Prim}(Z(\thetasphere{3}{\theta}))
  \end{equation*}
  with two disjoint circles meshes well with various familiar aspects of the geometry/topology of the 3-sphere.
  
  In the usual realization \cite[\S 1]{mats_3sph-1} of $\bS^3$ as a union of two solid tori $\bD^2\times \bS^1$ glued along their common boundary $\bS^1\times \bS^1$ (as familiar from the theory of {\it Heegaard splittings} \cite[\S 2]{hemp_3mfld}), the two exceptional circles of \Cref{pr:s3thetapi} item \Cref{item:azloc} can be thought of as the two ``core'' circles
  \begin{equation*}
    \bS^1\cong \{0\}\times\bS^1\subset \bD^2\times \bS^1
  \end{equation*}
  of the two solid tori. The complement
  \begin{equation*}
    \bS^3\setminus\left(\bS_0^1\sqcup \bS_1^1\right)
  \end{equation*}
  can be thought of as $\bT^2\times (0,1)$, as expected from \Cref{eq:cents3theta} after removing the problematic boundary points of the unit interval.
  
  Alternatively, given the {\it Hopf fibration} (see the discussion preceding \cite[Proposition 17.22]{bt_forms}) of $\bS^3$ over $\bS^2$ with fiber $\bS^1$, the two circles can be thought of as the fibers above two antipodal poles $\pm p$ of $\bS^2$. The complement is then a {\it trivial} $\bS^1$-fibration over
  \begin{equation*}
    \bS^2\setminus\{\pm p\} \,\cong \, \bS^1\times (0,1),
  \end{equation*}
  which is again identifiable with $\bT^2\times(0,1)$.
\end{remark}

The following result shows an opposite extreme to \Cref{th:strongtheta}. 

\begin{proposition}\label{pr:exactlypq}
  For any lowest-terms rational $\theta=\frac pq$, $\strivdim{\thetasphere{3}{\theta}}{\bZ/q} = \infty$.
\end{proposition}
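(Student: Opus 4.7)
The plan is to argue by contradiction. Supposing $\strivdim{\thetasphere{3}{\theta}}{\bZ/q}\le n<\infty$, \Cref{def:SLT} yields a $\bZ/q$-equivariant unital $*$-homomorphism
\begin{equation*}
\rho\colon C(E_n^\Delta\bZ/q)\longrightarrow \thetasphere{3}{\theta},
\end{equation*}
whose image $B$ is a $\bZ/q$-invariant commutative $C^*$-subalgebra. Its Gelfand spectrum $Y:=\mathrm{Spec}(B)$ carries a free $\bZ/q$-action, since a fixed character would annihilate every generator $a_i$ of the kind produced by \Cref{pr:alt_dim_Z/k} (such $a_i$ live in the nontrivial $\zeta$-eigenspace), contradicting the normalization $\sum_i a_i a_i^*=1$.

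Next, the center $Z_\theta\cong C(\bS^3)$ is $\bZ/q$-fixed and commutes with $B$, so the adjoined subalgebra $\tilde B:=C^*(B,Z_\theta)\subseteq \thetasphere{3}{\theta}$ remains commutative, acquires a $C(\bS^3)$-algebra structure, and its spectrum $\tilde Y$ embeds into $Y\times\bS^3$ with the $\bZ/q$-action still free (since it acts trivially on the $\bS^3$-factor). By \Cref{pr:s3thetapi} each fiber $(\thetasphere{3}{\theta})_x$ is either $M_q$ (on the Azumaya locus) or abelian of dimension $q$ (on the two exceptional circles), so any commutative subalgebra has dimension at most $q$. Hence $|\tilde Y_x|\le q$ for every $x\in\bS^3$. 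Freeness forces $|\tilde Y_x|$ to be a positive multiple of $q$ --- positive because $1\in\tilde B$ --- so $|\tilde Y_x|=q$ everywhere.

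As a commutative $C^*$-subalgebra of the continuous field $\thetasphere{3}{\theta}$ (\Cref{def:contfield}), $\tilde B$ is itself a continuous field, now of finite-dimensional commutative $C^*$-algebras of constant rank $q$. Lifting minimal projections locally via functional calculus promotes $\tilde Y\to \bS^3$ to a $q$-fold covering map, which must be trivial because $\pi_1(\bS^3)=0$. This identifies $\tilde B\cong C(\bS^3)^{\oplus q}$ with $\bZ/q$ cyclically permuting the $q$ summands, producing mutually orthogonal projections $p_0,\dotsc,p_{q-1}\in \thetasphere{3}{\theta}$ with $\sum_j p_j=1$ and $p_j=R^j(p_0)$.

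All $[p_j]$ then coincide in $K_0(\thetasphere{3}{\theta})$ (being in one $\bZ/q$-orbit of an automorphism), so $[1]=q[p_0]$. But $K_0(\thetasphere{3}{\theta})\cong \bZ$ is generated by $[1]$ by \cite[Theorem 4.3]{no_sph}, making $1$ divisible by $q$ in $\bZ$ --- absurd for $q\ge 2$ (the $q=1$ case is vacuous). The delicate step I anticipate is the promotion from ``constant fiber dimension $q$'' to ``honest $q$-fold covering map'': this requires fully exploiting that $\tilde B$ is a continuous field and invoking semiprojectivity of finite-dimensional commutative $C^*$-algebras to lift projections locally, and it deserves the most care in writing the proof.
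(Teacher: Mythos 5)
Your argument is essentially correct, and it takes a genuinely different route from the paper's. The paper restricts attention to the Azumaya bulk $\bT^2\times(0,1)$ of $\bS^3\cong\mathrm{Prim}(Z_\theta)$, packages the joint eigenspace decompositions of the commuting degree-$\zeta$ elements into a section of a bundle with fiber $\bP_q=(U(q)/U(1)^q)/(\bZ/q)$, and derives a contradiction in $\pi_1(\bP_q)\cong\bZ/q$ by comparing the sections induced by $u$ and $v$ near the two exceptional circles. You instead work globally over all of $\bS^3$: adjoining the center to the commutative invariant subalgebra produces a free fiber-preserving $\bZ/q$-space $\tilde Y\to\bS^3$ with exactly $q$ points per fiber, hence a $q$-fold covering, which trivializes since $\pi_1(\bS^3)=0$ and yields $q$ orthogonal $\bZ/q$-cyclic projections summing to $1$; the contradiction then lives in $K_0(\thetasphere{3}{\theta})\cong\bZ\cdot[1]$ rather than in a fundamental group. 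Your approach buys a cleaner global statement (no Heegaard-type decomposition, no flag manifolds) at the cost of needing the $K_0$ computation of Natsume--Olsen; the paper's approach needs only $\pi_1$ of a homogeneous space but requires the more delicate homotopy between $s_u$ and $s_v$ across the bulk.

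Three points need repair. First, the step \(|\tilde Y_x|\le q\) is not immediate from ``commutative subalgebras of $A_x$ have dimension $\le q$'': a priori $\tilde Y_x=\mathrm{Spec}(\tilde B/\overline{C_x(\bS^3)\tilde B})$ could be larger than the spectrum of the image of $\tilde B$ in $A_x$. This is fixed by extending a character $\chi\in\tilde Y_x$ to a pure state of $\thetasphere{3}{\theta}$; the associated irreducible representation sends the center to scalars via $\mathrm{ev}_x$, hence kills $J_x$, so $\chi$ does factor through $A_x$. Second, the step you flag as delicate is actually the easiest: you do not need continuous fields or semiprojectivity (and ``a commutative $C^*$-subalgebra of a continuous field is a continuous field'' is not true as stated, since the fibers change). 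A free action of the finite group $\bZ/q$ on the compact Hausdorff space $\tilde Y$ is automatically properly discontinuous, so $\tilde Y\to\tilde Y/(\bZ/q)$ is a covering map, and $\tilde Y/(\bZ/q)\to\bS^3$ is a continuous bijection of compact Hausdorff spaces, hence a homeomorphism; together these make $\tilde Y\to\bS^3$ a principal $\bZ/q$-covering. Third, ``being in one $\bZ/q$-orbit of an automorphism'' does not by itself force $[p_j]=[p_0]$ in $K_0$ (automorphisms can act nontrivially on $K_0$; think of a swap of two summands). What saves you is that $R$ embeds in the circle action $z_j\mapsto e(t)z_j$ on $\thetasphere{3}{\theta}$ and is therefore homotopic to the identity automorphism, so $R_*=\mathrm{id}$ on $K_0$. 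With these repairs (and the tacit assumption $q\ge 2$, which both proofs require), your argument goes through.
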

\begin{proof}
  We have from \Cref{eq:athetae} that there is a realization
  \begin{equation*}
    C(\bT^2_{\theta})\cong \Gamma(\cE\otimes \cE^*)
  \end{equation*}
  for a rank-$q$ vector bundle $\cE$ over $\bT^2$. Furthermore, \Cref{pr:s3thetapi} (together with \Cref{re:heeg}) describes the spectrum of the center $Z_{\theta}$ of $\thetasphere{3}{\theta}$ as
  \begin{equation}\label{eq:s3dec}
    \begin{aligned}
      \bS^3&\quad\cong\quad
             \left(\bD^2\times \bS^1\right)
             \cup_{\bT^2}
             \left(\bS^1\times \bD^2\right)\\
           &\quad\cong\quad
             \bS_0^1\ \sqcup\ \bT^2\times(0,1)\ \sqcup\ \bS_1^1,
    \end{aligned}
  \end{equation}
  with $\thetasphere{3}{\theta}$ a $q\times q$-matrix bundle over the Azumaya bulk $\bT^2\times (0,1)$ and a $\bC^q$-bundle over each of the two exceptional circles $\bS_i^1$, $i \in \{0, 1\}$. We can then recover $A_{\theta}$ as the quotient of $\thetasphere{3}{\theta}$ over the ``middle'' slice
  \begin{equation}\label{eq:coreslice}
    \bT^2\cong \bT^2\times\left\{\frac 12\right\}\subset \bT^2\times (0,1)
  \end{equation}
  of \Cref{eq:s3dec}.
 
  Assuming the strong local-triviality dimension is $d<\infty$, consider commuting normal elements $x_0, \ldots, x_d$ in the $\zeta:=e(1/q)$ eigenspace of $\thetasphere{3}{\theta}$ with 
  \begin{equation*}
    \sum_{j=0}^{d} x_jx_j^* \, = \, 1.
  \end{equation*}
  Over each point in the Azumaya locus $\bT^2\times (0,1)$, said elements specialize to commuting $q\times q$ matrices, of degree $\zeta$ with respect to the $\bZ/q$-action on $M_q$, and not all of these are zero. It follows that every such tuple splits the $\bC^q$ on which the corresponding $M_q$ acts into a direct sum of lines, namely, the $t\zeta^k$-eigenspaces of {\it any} of the non-zero $x_j$ for $0\le k\le q-1$ and $0<t\le 1$. Said spaces are the same for different $x_j$ because these normal elements commute. Further, the splitting is determined up to a cyclic permutation, as the $\bZ/q$-action maps the $\zeta^k$-eigenspace of $x_i$ onto its $\zeta^{k+1}$-eigenspace.
  
  The space of {\it ordered} splittings of $\bC^q$ as a direct sum of lines is the {\it full flag manifold} \cite[\S 3]{arv_flag} $U(q)/U(1)^q$, easily proven simply-connected via the long exact homotopy sequence \cite[\S 17.3]{steen_fib}. The quotient by the cyclic $\bZ/q$-action then produces a space $\bP_q$ with fundamental group $\bZ/q$, and hence a $\bP_q$-bundle $\cE_q$ over $\bT^2\times (0,1)$. Moreover, the $x_j$ jointly give, as described above, a single section of $\cE_q$ over all of $\bT^2\times (0,1)$.
  
  Varying the slice
  \begin{equation*}
    \bT^2\times\{t\}\subset \bT^2\times (0,1)
  \end{equation*}
  thus gives a homotopy $(s_t)_{t\in (0,1)}$ of sections of $\cE_q$ over the ``core'' slice $\bT^2\times\left\{\frac 12\right\}$ of \Cref{eq:coreslice}. On slices $\bT^2 \times \{\varepsilon\}$ with small $\varepsilon\in (0,1)$ (so, close to one of the two exceptional circles) $s_{\varepsilon}$ will be uniformly close to the section $s_u$ resulting from the eigenspaces of one of the generators $u$ of $A_{\theta}$. Similarly, $s_{1-\varepsilon}$ will be close to the section $s_v$ of $\cE_q$ induced by the {\it other} generator $v\in A_{\theta}$. So, $s_u$ and $s_v$ are homotopic.

The previous claim results in a contradiction. If one restricts everything in sight to the circle $\bS^1\subset \bT^2$ corresponding to $u$, sections of $\cE_q$ are nothing but loops in the fiber $\bP_q$ over $1\in \bS^1$. As such, $s_u$ is {\it not} nullhomotopic (as it generates $\pi_1(\bP_q)\cong \bZ/q$), but $s_v$ is. 
\end{proof}

\begin{remark}\label{re:whyjustq}
  The reason the argument employed in the proof of \Cref{pr:exactlypq} only goes through for $\theta$ with denominator {\it exactly} (rather than divisible by) $q$ is that for higher-dimensional $\bC^{kq}$, the $x_i$ would not produce an unambiguous splitting into $q$ subspaces. Rather, the (non-)vanishing patterns of the $x_i$ on various subsets of $\bT^2\times (0,1)$ would force us to make choices over which eigenspaces we select, and those choices might not be compatible.
\end{remark}

  \begin{remark}\label{re:alsonotsup}
    The strict inequality $\wtrivdim{A_{\theta}}{\bZ/2}>0$, which is equivalent to \cite[Proposition 1.9]{bentheta}, provides yet another natural family of examples showing that \Cref{eq:supax} is generally not an equality.
    
    Consider noncommutative tori $A_{\theta}$ with lowest-terms rational $\theta=\frac pq$ and even $q$, acted upon by $\bZ/2$ in the usual generator-negating fashion. The matrix-bundle realization \Cref{eq:athetae} then casts $A_{\theta}$ as a $\bZ/2$-$C(\bT^2)$-algebra with
    \begin{equation*}
      C(\bT^2):=C^*(u_1^q,u_2^q)\subset C^*(u_1,u_2) = A_{\theta},
    \end{equation*}
    and $\bZ/2$ is easily seen to act on every fiber $M_q$ as conjugation by a unitary involution with equidimensional $(\pm 1)$-eigenspaces. The fiber-wise LT dimensions thus vanish, so that
    \begin{equation*}
      \wtrivdim{A_{\theta}}{\bZ/2}
      >
      \sup_{x\in \bT^2} \wtrivdim{A_{\theta,x}}{\bZ/2}
      =
      \sup_{x\in \bT^2} \wtrivdim{M_q}{\bZ/2}
      =0
    \end{equation*}
    and similarly for the other LT dimensions. 
  \end{remark}

\section*{Acknowledgments}

The views expressed in this document are those of the authors and do not reflect the official policy or position of the U.S. Naval Academy, the Department of the Navy, the Department of Defense, or the U.S. Government.

\vskip .1 in

\noindent The authors would like to thank the referee for helpful comments.

\vskip .1 in

\noindent A.C. was partially supported by NSF grant DMS-2001128.

\vskip .1 in

\noindent This work is part of the project Graph Algebras partially supported by EU grant HORIZON-MSCA-SE-2021 Project 101086394.



\addcontentsline{toc}{section}{References}

\def\polhk#1{\setbox0=\hbox{#1}{\ooalign{\hidewidth
  \lower1.5ex\hbox{`}\hidewidth\crcr\unhbox0}}}
  \def\polhk#1{\setbox0=\hbox{#1}{\ooalign{\hidewidth
  \lower1.5ex\hbox{`}\hidewidth\crcr\unhbox0}}}
  \def\polhk#1{\setbox0=\hbox{#1}{\ooalign{\hidewidth
  \lower1.5ex\hbox{`}\hidewidth\crcr\unhbox0}}}
  \def\polhk#1{\setbox0=\hbox{#1}{\ooalign{\hidewidth
  \lower1.5ex\hbox{`}\hidewidth\crcr\unhbox0}}}
  \def\polhk#1{\setbox0=\hbox{#1}{\ooalign{\hidewidth
  \lower1.5ex\hbox{`}\hidewidth\crcr\unhbox0}}}


\Addresses

\end{document}